\numberwithin{equation}{section}
\newtheorem{prop}{Proposition}[section]
\newtheorem{lemma}[prop]{Lemma}
\newtheorem{cor}[prop]{Corollary}
\newtheorem{thm}[prop]{Theorem}
\newtheorem*{mthm}{Main Theorem}
\newtheorem*{mcor}{Corollary}
\newtheorem{clm}[prop]{Claim}
\newtheorem{prob}[prop]{Problem}
\theoremstyle{definition}
\newtheorem{dfn}[prop]{Definition}
\newtheorem{remark}[prop]{Remark}
\DeclareMathOperator{\lex}{lex}
\newcommand{\mb}[1]{\mathbb{#1}}
\newcommand{\oo}{\omega}
\newcommand{\uhr}{\upharpoonright}
\newcommand{\omg}{{\omega_1}}
\def\<{\left\langle}
\def\>{\right\rangle}
\def\br#1;#2;{\bigl[ {#1} \bigr]^ {#2} }
\newcommand{\mbb}[1]{\mathbb{#1}}
\newcommand{\mf}[1]{\mathfrak{#1}}
\newcommand{\ev}[3]{\mathfrak{e}_{#1}(#2,#3)}
\newcommand{\uhp}{\upharpoonright}
\newcommand{\setm}{\setminus}
\newcommand{\supp}{\operatorname{supp}}
\newcommand{\rd}[1]{{h}({#1})}
\begin{document}

\title{Infinite monochromatic sumsets for colourings of the reals}



\author[P. Komj\'ath]{P\'eter Komj\'ath}
\address[P. Komj\'ath]{Institute of Mathematics, E\"otv\"os University
Budapest, P\'azm\'any P. s. 1/C
1117, Hungary}
\email{kope@cs.elte.hu}

\author[I. Leader]{Imre Leader}
\address[I. Leader]{Centre for Mathematical Sciences, Wilberforce Road, Cambridge CB3 0WB, UK}
\email{I.Leader@dpmms.cam.ac.uk}

\author[P. A. Russell]{Paul A. Russell}
\address[P. A. Russell]{Churchill College, Cambridge CB3 0DS, UK}
\email{P.A.Russell@dpmms.cam.ac.uk}

\author[S. Shelah]{Saharon Shelah}
\address[S. Shelah]{Einstein Institute of Mathematics, The Hebrew University of Jerusalem, Jerusalem,
91904, Israel, and Department of Mathematics, Rutgers University, New Brunswick, NJ
08854, USA.}
\email{shelah@math.huji.ac.il}

\author[D. T. Soukup]{D\'aniel T. Soukup}
\address[D.T. Soukup]{Universit\"at Wien,
	Kurt G\"odel Research Center for Mathematical Logic, Austria}
\email[Corresponding author]{daniel.soukup@univie.ac.at}

\author[Z. Vidny\'anszky]{Zolt\'an Vidny\'anszky}
\address[Z. Vidny\'anszky]{Universit\"at Wien,
	Kurt G\"odel Research Center for Mathematical Logic, Austria and Alfr\'ed 	R\'enyi Institute of Mathematics, Hungarian Academy of Sciences, Hungary}
\email{vidnyanszkyz@gmail.com}

\subjclass[2010]{Primary 03E02,	03E35, 05D10 }
\keywords{sumset, monochromatic, colouring, partition relation, continuum}

\date{\today}

\dedicatory{}

\commby{}

\begin{abstract}
 N. Hindman, I. Leader and D. Strauss proved that it is consistent that there is a finite colouring of $\mb R$ so that no infinite sumset $X+X$ is monochromatic. Our aim in this paper is to prove a
consistency result in the opposite direction: we show that, under certain set-theoretic assumptions, for any finite colouring $c$ of $\mb R$ there is an infinite $X\subseteq \mb R$ so that $c\uhr X+X$ is constant. 
\end{abstract}

\maketitle



	\section{Introduction}
	Neil Hindman's famous sumset theorem states that for any finite colouring of the natural numbers $\mbb N$, there is an infinite set $X$ so that all sums of distinct elements of $X$ are coloured the same. There is a striking difference however, if one allows repetitions in the sumsets; let us begin by recalling a famous open problem of J. Owings:
	
	\begin{prob}[\cite{owings}]\label{owings} Is there a colouring of $\mbb N$ with 2 colours which is not constant on sets of the form $X+X=\{x+y:x,y\in X\}$ whenever $X\subseteq \mbb N$ is infinite?
	\end{prob}
	
	The answer is yes if one is allowed to use 3 colours \cite{hindmanrep}, but, surprisingly, Problem \ref{owings} is still unsolved. Hence, it is very natural to ask, what happens if instead of the natural numbers we colour the real numbers. The following theorem is a recent result of  N. Hindman, I. Leader and D. Strauss:
	
	\begin{thm}[\cite{Hindman}]\label{hindmanrep} It is consistent (namely, it follows from $2^{\aleph_0}=|\mb R|<\aleph_\omega$) that there is a finite colouring $c$ of $\mathbb R$ such that $c$ is not constant on any set of the form $X+X$ where $X\subseteq \mathbb R$ is infinite.
	\end{thm}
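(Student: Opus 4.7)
Since $2^{\aleph_0} < \aleph_\omega$ forces $\mathfrak{c} = \aleph_n$ for some finite $n$, the plan is to proceed by induction on $n$, using as a base ingredient the existence of a finite colouring of $\mathbb{N}$ (and more generally of countable torsion-free abelian groups such as $\mathbb{Q}^d$) without monochromatic $X+X$ for infinite $X$, as in Hindman's three-colouring \cite{hindmanrep}. Fix a Hamel basis $H = \{h_\alpha : \alpha < \mathfrak{c}\}$ of $\mathbb{R}$ over $\mathbb{Q}$, well-ordered in type $\omega_n$, and write each nonzero $r$ canonically as $r = \sum_{i=1}^{k} q_i h_{\alpha_i}$ with $\alpha_1 < \ldots < \alpha_k$ and $q_i \in \mathbb{Q}^\ast$. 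Set $\supp(r) = \{\alpha_1, \ldots, \alpha_k\}$ and $\ell(r) = k$.

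The colouring $c$ will be a product of two finite-valued components. The first, $c_1$, restricts on each $\mathbb{Q}$-subspace $V_\beta = \mathrm{span}_{\mathbb{Q}}\{h_\alpha : \alpha < \beta\}$ of size strictly less than $\aleph_n$ to a good colouring furnished by the inductive hypothesis (or by the base case); the subtlety is that a single finite palette has to work uniformly for every such $V_\beta$. The second, $c_2$, records coarse combinatorial information about $\supp(r)$, such as $\ell(r)$ modulo a small integer together with an invariant of $\max \supp(r)$ drawn from the well-ordering of $\omega_n$ (for instance, its cofinality class or Cantor-normal-form type).

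To verify the colouring, assume for contradiction that some infinite $X \subseteq \mathbb{R}$ has $X + X$ monochromatic. Apply the $\Delta$-system lemma to $\{\supp(x) : x \in X\}$ and thin further so that the $R$-coefficients are constant across the resulting infinite $Y \subseteq X$; call the common root $R$ and the common petal size $s$. If $s = 0$ then $Y \subseteq V_R$, and monochromaticity of $Y + Y$ contradicts the inductive choice of $c_1$ on $V_R$. If $s \geq 1$, a direct support calculation gives $\ell(2y) = |R| + s$ against $\ell(y + y') = |R| + 2s$ for distinct $y, y' \in Y$, and a contradiction must be extracted from $c_2$.

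The main obstacle is precisely the $s \geq 1$ case: a modular colouring by $\ell(r)$ alone collapses whenever $s$ is a multiple of the modulus, while any finite colouring depending only on a single ordinal invariant such as $\cf(\max \supp(r))$ stabilises along an $\omega$-sequence by pigeonhole. So $c_2$ must exploit simultaneously the support size, the position of $\max \supp(r)$ within $\omega_n$, and probably secondary support information such as the behaviour of $\alpha_{k-1}$ relative to $\alpha_k$; arranging all of this within a uniformly finite palette is the combinatorial heart of the argument, and is where the hypothesis $\mathfrak{c} < \aleph_\omega$ enters essentially, by bounding the depth of the support structure to a finite $n$. The base case ($n = 1$, that is, CH) is itself nontrivial, requiring the global colouring to be synthesised coherently from its restrictions to countable initial segments.
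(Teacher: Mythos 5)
The framework you set up is the right one and essentially matches the Hindman--Leader--Strauss approach that the paper cites: exploit $(\mathbb{R},+)\cong Q(\mathfrak c)=\bigoplus_{\mathfrak c}\mathbb{Q}$ via a Hamel basis, reduce to the case $\mathfrak c=\aleph_n$, and argue by induction on $n$ using finite supports. Indeed, the result quoted in the paper is precisely that for each $n<\omega$ there is a colouring $c:Q(\aleph_n)\to 2^{4+n}\cdot 9$ with no infinite monochromatic $X+X$; the $2^{4+n}$ factor already signals the induction on $n$ that you describe. So you have identified the correct skeleton.

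However, the proposal is not a proof, and you acknowledge this yourself: you write that in the $s\geq 1$ case ``a contradiction must be extracted from $c_2$'' and that arranging $c_2$ ``within a uniformly finite palette is the combinatorial heart of the argument.'' That heart is exactly what is missing. You never define $c_2$; you only list desiderata it ``must exploit'' and explain why two naive candidates (support size modulo a constant; cofinality of $\max\supp$) fail. Without an explicit colouring and a verification that kills every infinite $X$ with $s\geq 1$, nothing has been proved. There is also a secondary gap in the $\Delta$-system step: after extracting a sunflower from $\{\supp(x):x\in X\}$ you cannot in general ``thin further so that the $R$-coefficients are constant'' --- the coefficients range over the infinite set $\mathbb{Q}^{*}$, so pigeonhole does not apply. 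In the actual argument this is handled by having the colouring itself encode coefficient information (or by a more delicate thinning), and this is another place where a real construction is needed rather than asserted. Finally, the base case is waved at: Hindman's three-colouring is for $(\mathbb{N},+)$, and extending it to a colouring of $Q(\aleph_0)=\bigoplus_\omega\mathbb{Q}$ (a countable $\mathbb{Q}$-vector space) already requires a nontrivial argument with more colours, which you do not supply. In short: right scaffolding, but all of the load-bearing combinatorics is absent.
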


	 The group $(\mathbb{R},+)$ is isomorphic to the direct sum of continuum many copies of $(\mathbb{Q},+)$  so we consider colourings of this direct sum. Thus, let $Q(\kappa)$ denote the group $\bigoplus_{\kappa} \mb Q$, i.e., the set of functions with finite support from $\kappa$ to $\mb Q$ with the operation of coordinatewise addition, and let the semigroup $N(\kappa)=\bigoplus_\kappa \mb N$ be defined similarly. As before, if $X$ is a subset of one of the above (semi)groups, let $X+X=\{x+y:x,y\in X\}$, and note that repetitions are allowed in the sumsets. 
	 
	 In order to prove Theorem \ref{hindmanrep}, the authors of \cite{Hindman} showed the following: for each $n\in \omega$, there is a map $c:Q(\aleph_n)\to 2^{4+n}\cdot 9$ such that $c\uhp X+X$ is not constant for any infinite $X\subseteq Q(\aleph_n)$.

	\medskip

	It was asked in \cite[Question 2.9]{Hindman} if the conclusion of Theorem \ref{hindmanrep} is true in ZFC. Note that given an infinite semigroup $(G,+)$ and finite colouring $c:G\to r$ one can always find an infinite set $X\subseteq G$ so that both $\{x+y:x\neq y\in X\}$ and $\{x+x:x\in X\}$ are monochromatic. This is by the Ramsey theorem and the pigeon hole principle, respectively. However, the constant values of $c$ on these two sets might differ. Arranging that these two colours are the same is the main difficulty in proving a positive partition result.

	The purpose of this paper is to show that the conclusion of Theorem \ref{hindmanrep} can fail in some models of ZFC constructed with the use of a large cardinal.
	
	\begin{mthm}
		Consistently relative to an $\omg$-Erd\H os cardinal, for any $c:N(2^{\aleph_0})\to r$ with $r$ finite there is an infinite $X\subseteq N(2^{\aleph_0})$ so that $c\uhr X+X$ is constant.
	\end{mthm}

	The proof of the Main Theorem is a combination of various ideas from the six authors from between 2015 and 2017. I.  Leader and P.  A. Russell \cite{leaderrussell}, and independently P. Komj\'ath\footnote{Personal communication.} proved that if a colouring $c:N(\mf \kappa)\to r$ is \emph{canonical} in some sense\footnote{That is, $c(x)$ only depends on the values of $x$ in its 'support' but not the support itself.} on a large set then infinite monochromatic sumsets can be found. Hence, the conclusion of the Main Theorem was known for $N(\mf \kappa)$ instead of $N(2^{\aleph_0})$ where $\kappa$ is large enough relative to the number of colours $r$. For example, $\kappa\geq \beth_\oo$ suffices for any finite $r$, since it allows the application of the Erd\H os-Rado partition theorem with high exponents; the interested reader can consult \cite{leaderrussell} or see Lemma \ref{lm:leader} below. 
	
	Since $2^{\aleph_0}$ badly fails such strong positive partition relations, other ideas were required. D. Soukup and Z. Vidny\'anszky analysed the situation further to see what alternative partition relation might suffice to carry out some form of such a canonization argument.  It was S. Shelah who suggested using \cite{wsr2} and indeed, the last two authors found a way to combine \cite[Theorem 3.1]{wsr2} with all the aforementioned machinery resulting in the Main Theorem.
	\medskip

	Since $N(2^{\aleph_0})$ embeds into $\mb R$, we immediately have the desired conclusion.
	
	\begin{mcor}
		Consistently relative to an $\omg$-Erd\H os cardinal, for any finite colouring $c$ of $\mb R$  there is an infinite $X\subseteq \mb R$ so that $c\uhr X+X$ is constant. 
	\end{mcor}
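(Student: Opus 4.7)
The plan is to reduce the problem to the Main Theorem by embedding $N(2^{\aleph_0})$ as a subsemigroup of $(\mb R,+)$ and then pulling back the given colouring.

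First, I would note that $(\mb R,+)$ is a $\mathbb{Q}$-vector space of dimension $2^{\aleph_0}$; choosing a Hamel basis $\{b_\alpha : \alpha<2^{\aleph_0}\}\subseteq \mb R$, the map
\[
\varphi: N(2^{\aleph_0})\to \mb R,\quad x\mapsto \sum_{\alpha\in \supp(x)} x(\alpha)\,b_\alpha
\]
is an injective semigroup homomorphism: linear independence of the $b_\alpha$ over $\mathbb{Q}$ gives injectivity, while coordinatewise addition in $N(2^{\aleph_0})$ corresponds exactly to real addition of the images.

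Now, given any finite colouring $c:\mb R\to r$, consider the pullback $c':=c\circ\varphi : N(2^{\aleph_0})\to r$. By the Main Theorem, in the relevant model there is an infinite $X\subseteq N(2^{\aleph_0})$ on which $c'\uhr X+X$ is constant. Setting $Y:=\varphi(X)\subseteq \mb R$, the set $Y$ is infinite (as $\varphi$ is injective), and $Y+Y=\varphi(X+X)$ (as $\varphi$ is a homomorphism), so $c\uhr Y+Y$ is constant with the same value.

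There is no genuine obstacle beyond the Main Theorem itself: all that is added here is the standard fact that $N(2^{\aleph_0})$ embeds as a subsemigroup of $(\mb R,+)$, realised via any Hamel basis (equivalently, via any $\mathbb{Q}$-linearly independent subset of $\mb R$ of cardinality $2^{\aleph_0}$).
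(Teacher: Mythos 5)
Your proof is correct and takes the same route as the paper: the paper derives the corollary from the Main Theorem via the one-line observation that $N(2^{\aleph_0})$ embeds into $(\mb R,+)$, and you have simply made that embedding explicit with a Hamel basis and verified the pullback argument.
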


\begin{remark}
We recently learned from Jing Zhang\footnote{Personal communication.} that he showed that the conclusion of our main theorem holds if one adds $\aleph_\omega$ many Cohen reals to a model of GCH.\footnote{I.e., the Generalized Continuum Hypothesis which says that $2^\kappa=\kappa^+$ for any infinite cardinal $\kappa$.} In particular, no large cardinals are necessary and the continuum can be as small as possible (cf. Theorem \ref{hindmanrep}). We conclude our paper with further open problems.
\end{remark}

	\medskip

	Let us briefly mention the related problem of studying \emph{uncountable} sumsets. Starting with \cite{Hindman} a sequence of papers \cite{kope, soukupweiss, david, davidassaf, kope2, carlucci} elaborates on this question and its relatives; we refer the reader to the introduction of \cite{davidassaf} for an excellent summary.  In particular, A. Rinot and D. Fern\'andez-Bret\'on showed that one can colour $\mb R$ with 2 colours ($\oo$ many colours) such that all colours appear on all sumsets of size $\aleph_1$ (sumsets of size $2^{\aleph_0}$, respectively); both of these results are provable in ZFC \cite{davidassaf} and hold even if repetitions are not allowed in the sumsets. This implies that our Main Theorem is optimal in the sense that we cannot hope to find uncountable monochromatic sumsets.
	
	\medskip


	
	Throughout the paper, we will use standard notations and facts which can be found in \cite{jech}. Our proof is elementary; it only uses independence results as black boxes, so it can be understood without any familiarity with the techniques of forcing, large cardinals etc.

	\subsection*{Further acknowledgements} We discussed the question at hand and received helpful comments from several other people, including M. Elekes, A. Rinot, L. Soukup, S. Todorcevic, and W. Weiss. We are grateful for the anonymous referee for his/her careful reading and valuable advice which made our exposition more accessible.
	
	\smallskip
	
	D. Soukup was supported in part by PIMS, the	National Research, Development and Innovation Office--NKFIH grant no. 113047 and the FWF Grant I1921. This research was partially done whilst  visiting  the Isaac Newton Institute for Mathematical Sciences part of the programme `Mathematical, Foundational and Computational Aspects of the Higher Infinite' (HIF) funded by EPSRC grant EP/K032208/1.
	
	\smallskip
	Z. Vidny\'anszky was partially supported by the	National Research, Development and Innovation Office--NKFIH grants no.~113047, no.~104178 and no.~124749 and by FWF Grant P29999.

	\section{The proof of the Main Theorem}
	
	The rest of our paper is devoted to proving our Main Theorem. We start by recalling the main ideas of \cite{leaderrussell} in Section \ref{sec:leader} which will motivate both the somewhat technical canonization result of Shelah that we state in Section \ref{sec:sh} and our proof of the Main Theorem in Section \ref{sec:proof}

	\subsection{Patterns and homogeneous sets}\label{sec:leader} Let $\kappa$ be a cardinal.
	Given $s\in \mb N^{<\omega}$ and $a\in [\kappa]^{|s|}$, we define $x=s*a\in N(\kappa)$ by $\supp(x)=a$ and $x(a(i))=s(i)$ where $\{a(i):i<|s|\}$ is the increasing enumeration of $a$. 
	
	Now, if $c:N(\kappa)\to r$ is a colouring and $s\in \mb N^{<\omega}$, we define $c_s:[\kappa]^{|s|}\to r$ by $$c_s(a)=c(s*a).$$ 
	
	That is, any colouring $c$ of the direct sum $N(\kappa)$ and each pattern $s$ naturally defines a colouring $c_s$ of finite subsets of the continuum.
	The main idea from  \cite{leaderrussell} is the following: given an $r$-colouring $c$ of $N(\kappa)$, there are $r+1$ many patterns $(s_l)_{l\leq r}$ so that if each colouring $c_{s_l}$ is constant on a large set $W\subseteq \kappa$ then we can find an infinite $X\subseteq N(\kappa)$ so that $c\uhr X+X$ is constant.
	
	Let us define these patterns: we let $s_l=(2,2, \dots ,2,2,4,\dots ,4)$ where $2l$-many 2 are followed by $(r-l)$-many 4 for each $l\leq r$; see Figure \ref{fig:patternn}. So, for a fixed $r$, we defined $(r+1)$-many patterns.
		
		\begin{figure}[H]
			\psscalebox{0.7 0.7} 
			{
				\begin{pspicture}(0,-2.115)(8.898557,2.115)
				\psline[linecolor=black, linewidth=0.04](0.8,0.285)(0.8,-1.315)
				\psline[linecolor=black, linewidth=0.04](1.6,0.285)(1.6,-1.315)
				\psdots[linecolor=black, dotsize=0.2](1.6,0.285)
				\psdots[linecolor=black, dotsize=0.2](0.8,0.285)
				\psline[linecolor=black, linewidth=0.04](4.4,0.285)(4.4,-1.315)
				\psline[linecolor=black, linewidth=0.04](5.2,0.285)(5.2,-1.315)
				\psdots[linecolor=black, dotsize=0.2](5.2,0.285)
				\psdots[linecolor=black, dotsize=0.2](4.4,0.285)
				\psline[linecolor=black, linewidth=0.04](6.0,1.885)(6.0,-1.315)
				\psdots[linecolor=black, dotsize=0.2](6.0,1.885)
				\psline[linecolor=black, linewidth=0.04](6.8,1.885)(6.8,-1.315)
				\psdots[linecolor=black, dotsize=0.2](6.8,1.885)
				\psline[linecolor=black, linewidth=0.04](8.8,1.885)(8.8,-1.315)
				\psdots[linecolor=black, dotsize=0.2](8.8,1.885)
				\psdots[linecolor=black, dotsize=0.1](2.4,-0.515)
				\psdots[linecolor=black, dotsize=0.1](2.8,-0.515)
				\psdots[linecolor=black, dotsize=0.1](3.2,-0.515)
				\psdots[linecolor=black, dotsize=0.1](7.2,-0.515)
				\psdots[linecolor=black, dotsize=0.1](7.6,-0.515)
				\psdots[linecolor=black, dotsize=0.1](8.0,-0.515)
				\rput[bl](2.4,-2.115){$2l$-many}
				\rput[bl](6.4,-2.115){$(r-l)$-many}
				\rput[bl](0.0,0.285){2}
				\rput[bl](5.2,1.885){4}
				\end{pspicture}
			}
			\caption{The pattern $s_l$.}
			\label{fig:patternn}
		\end{figure}
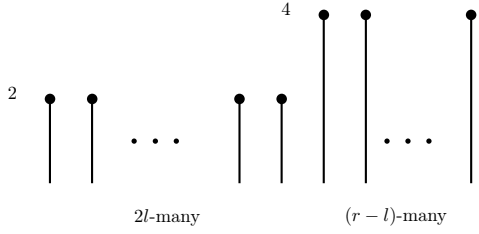

		Let us consider the special case of $r=2$ when  $s_0=(4,4), s_1=(2,2,4)$ and $s_2=(2,2,2,2)$. Suppose that $W\subseteq \kappa$ has order type $\omega+1$ and all three colourings $c_{s_0}$, $c_{s_1}$ and $c_{s_2}$ are constant on $W$. Now, two of these constant values must agree, say  both $c_{s_0}$ and $c_{s_1}$ are constant 0. Pick ordinals $\alpha_0<\alpha_1<\dots<\alpha_\omega$ in $W$ and let $x_i=\{(\alpha_i,2),(\alpha_\omega,2)\}$ for $i<\oo$. It is easy to see that $$c(2x_i)=c_{s_0}(\alpha_i,\alpha_\omega)=0=c_{s_1}(\alpha_i,\alpha_j,\alpha_\omega)=c(x_i+x_j)$$ for any $i<j<\omega$. So $c\uhr X+X=0$ for $X=\{x_i:i<\oo\}$.
		
		\medskip
		
		In general, the following holds:
		
		\begin{lemma}\label{lm:leader} Suppose that $c:N(\kappa)\to r$ and there is some $W\subseteq \kappa$ of order type $\omega+r-1$ so that each $c_{s_l}\uhr [W]^{r+l}$ is constant. Then there is an infinite $X\subseteq N(\kappa)$ so that $c\uhr X+X$ is constant. 
		\end{lemma}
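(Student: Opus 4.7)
The plan is a pigeonhole argument followed by an explicit construction. Since each of the $r+1$ restrictions $c_{s_l}\uhr [W]^{r+l}$ (for $l=0,1,\ldots,r$) is constant but the codomain has only $r$ values, there must exist $l<l'$ in $\{0,1,\ldots,r\}$ whose constants agree; call this common value $\gamma$. The goal is then to produce an infinite $X=\{x_i:i<\omega\}\subseteq N(\kappa)$ so that every double $2x_i$ realises the pattern $s_l$ (thus receiving colour $\gamma$) and every proper sum $x_i+x_j$ with $i<j$ realises the pattern $s_{l'}$ (also receiving colour $\gamma$).

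For the construction, I would pick disjoint subsets $b_L, W_0, b_R\subseteq W$ with $b_L<W_0<b_R$ pointwise, $|b_L|=2l$, $\otp(W_0)=\omega$, and $|b_R|=r-l'$. Such subsets exist in $W$ because $W$ has order type $\omega+r-1$ and $r-l'\leq r-1$. I would then choose pairwise disjoint $a_i\in[W_0]^{l'-l}$ with $a_i<a_j$ pointwise whenever $i<j$, and define $x_i\in N(\kappa)$ to take value $1$ on each element of $b_L$ and value $2$ on each element of $a_i\cup b_R$ (and $0$ elsewhere). A direct check then shows that, in sorted order, $2x_i$ reads as $2l$ twos (from $b_L$) followed by $r-l$ fours (from $a_i\cup b_R$), matching pattern $s_l$; and $x_i+x_j$ for $i<j$ reads as $2l'$ twos (from $b_L\cup a_i\cup a_j$) followed by $r-l'$ fours (from $b_R$), matching pattern $s_{l'}$. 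Thus $c(2x_i)=c_{s_l}(b_L\cup a_i\cup b_R)=\gamma=c_{s_{l'}}(b_L\cup a_i\cup a_j\cup b_R)=c(x_i+x_j)$ for all $i\leq j$, so $c\uhr X+X\equiv\gamma$.

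The creative step---and what I expect to be the main obstacle---is discovering the right sizes and positions of the pieces $b_L, a_i, b_R$ so that one uniform assignment of values to $x_i$ realises both patterns simultaneously. The guiding observation is that doubling converts each $2$-entry of $x_i$ into a $4$-entry, whereas summing two elements with disjoint ``head'' blocks $a_i, a_j$ leaves those entries at value $2$; consequently $a_i$ must fall in the ``$4$''-region of $s_l$ while $a_i\cup a_j$ must fall in the ``$2$''-region of $s_{l'}$. The sizes $|b_L|=2l$, $|a_i|=l'-l$, $|b_R|=r-l'$ are precisely what make this alignment work, after which the verification is routine.
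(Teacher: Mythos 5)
Your proof is correct and follows essentially the same route as the paper's: both use the pigeonhole principle on the $r+1$ constant values to find $l<l'$ with matching colour, and both construct $x_i$ with support $b_L\cup a_i\cup b_R$ (the paper calls these $a\cup b_i\cup c$) of sizes $2l$, $l'-l$, $r-l'$, taking value $1$ on the left block and $2$ on the rest, so that doubling realises $s_l$ while cross sums realise $s_{l'}$. The only cosmetic difference is notation.
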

	\begin{proof} Suppose that $W=\{\alpha_i:i<\omega+r-1\}$ satisfies the above requirements. Now, pick $l<k\leq r$ so that $c_{s_l}$ and $c_{s_k}$ have the same constant value on $W$. We define $x_i\in N(\kappa)$ for $i<\oo$  as follows: the support of $x_i$ has the form $a\cup b_i\cup c$ where $a=\{\alpha_0,\alpha_1,\dots ,\alpha_{2l-1}\}$, $c=\{\alpha_\omega,\dots,\alpha_{\omega+r-k-1}\}$ and the sets $\{b_i:i<\oo\}$ are pairwise disjoint of size $k-l$ selected from $\{\alpha_n:2l\leq n<\oo\}$. We define $x_i\uhr a$ constant 1 and $x_i\uhr (b_i\cup  c)$ constant 2. It is easy to check that $$c(2x_i)=c_{s_l}(a\cup b_i\cup c)=c_{s_k}(a\cup b_i\cup b_j\cup c)=c(x_i+x_j)$$ for any $i<j<\omega$. So $c\uhr X+X$ is constant for $X=\{x_i:i<\oo\}$.
		\end{proof}
		
		Almost the same result as Lemma \ref{lm:leader} appears in the proof of \cite[Theorem 2]{leaderrussell} however the authors require a slightly larger order-type for $W$. We also learned from J. Zhang that he used the $r=2$ case of Lemma \ref{lm:leader} to note that for any 2-colouring of $\mb R$, one can find an infinite $X$ so that $X+X$ is monochromatic.
	
However, it is well known that for $\kappa=2^{\aleph_0}$ we cannot necessarily find the $W$ that is required to apply the above result for $r>2$. Still, we can (consistently) find a large subset $W$ and a map $F:W \to 2^\omega$ such that the colourings $c_{s_l}$ are simple on $W$ in the sense that the colour of a tuple is essentially determined by the equivalence class of its $F$ image with respect to a natural equivalence relation on finite tuples on $2^\omega$. Making the latter precise is the content of the next section.

	\subsection{Shelah's canonization theorem}\label{sec:sh} 
	Our goal now is to explain \cite[Theorem 3.1 (2)]{wsr2} and we aimed to follow the definitions there.  As usual, for distinct $s,t \in 2^\omega$ let  $\Delta(s,t)$ stand for $\min\{n:s(n)\not =t(n)\}$. 
	
	We now define the equivalence relation that we mentioned earlier.
	

	
	\begin{dfn}
    \label{d:similar}
	We say that $\bar t,\bar s\in (2^\oo)^k$ are \emph{similar} iff for all $l_1,l_2,l_3,l_4<k$:
	\begin{enumerate}
		\item $\Delta(\bar t(l_1),\bar t(l_2))<\Delta(\bar t(l_3),\bar t(l_4))$ iff  $\Delta(\bar s(l_1),\bar s(l_2))<\Delta(\bar s(l_3),\bar s(l_4)) ,$
		\item  $$\bar t(l_3)\uhr n<_{\lex} \bar t(l_4)\uhr n \textmd{ for }n=\Delta(\bar t(l_1),\bar t(l_2))$$ iff $$\bar s(l_3)\uhr m<_{\lex} \bar s(l_4)\uhr m \textmd{ for }m=\Delta(\bar s(l_1),\bar s(l_2)),$$
		
		\item $$\bar t(l_3)(n)=0 \textmd{ for }n=\Delta(\bar t(l_1),\bar t(l_2))$$ iff $$\bar s(l_3)(m)=0 \textmd{ for }m=\Delta(\bar s(l_1),\bar s(l_2)).$$
	\end{enumerate}
		\end{dfn}
		
		The above definition says that the ordered tuples $\bar t$ and $\bar s$ have  the same branching pattern (conditions (1) and (2)) and the same values at the corresponding levels (condition (3)). Observe that there are only finitely many equivalence classes for similarity in  $(2^\oo)^k$.
		
		\medskip

		Now, given $d:[W]^k \to r$ and $F:W \to 2^\oo$, we say that  
		\begin{enumerate}[(a)]
		    \item two tuples $\bar \alpha=\{\alpha_0<\dots<\alpha_{k-1}\},\bar \beta=\{\beta_0<\dots<\beta_{k-1}\}\subset W$ are \emph{$F$-similar} if $(F(\alpha_l))_{l<k}$ and $(F(\beta_l))_{l<k}$ are similar, and
		    \smallskip
		    \item 	$d$ is \emph{$F$-canonical} if $d(\bar \alpha)=d(\bar \beta)$ for any two $F$-similar tuples $\bar \alpha,\bar \beta$ from $W$. 
		\end{enumerate}
	
		\medskip

	We are ready to state Shelah's theorem.

	\begin{thm}\cite[Theorem 3.1 (2)]{wsr2}\label{Shmodel}
		Suppose that $\lambda$ is an $\omg$-Erd\H os cardinal in $V$. Then there is a forcing notion $\mb P$ so that $V^\mb P$ satisfies the following:
		\begin{enumerate}
			\item $ 2^{\aleph_0}=\lambda$,
			\item $\textmd{MA}_{\aleph_1}(\textmd{Knaster})$ (see below), and
			\item  for any $d:[\lambda]^k\to r$ (with $r,k\in \oo$) there is $W\in [\lambda]^{\aleph_1}$ and injective $F:W\to 2^\oo$ so that $d\uhr [W]^k$ is $F$-canonical. 
		\end{enumerate}
	\end{thm}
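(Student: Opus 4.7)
The plan is to build $\mathbb{P}$ as a finite-support iteration $\langle \mathbb{P}_\alpha,\dot Q_\alpha:\alpha<\lambda\rangle$ of Knaster posets of cardinality $<\lambda$, using a standard book-keeping so that every Knaster poset of size $\aleph_1$ appearing in $V^{\mathbb{P}}$ is handled cofinally often. Since finite-support iterations of Knaster forcings remain Knaster, cardinals are preserved, and under a suitable GCH hypothesis below $\lambda$ the usual counting argument yields $2^{\aleph_0}=\lambda$ together with $\mathrm{MA}_{\aleph_1}(\mathrm{Knaster})$ in $V^{\mathbb{P}}$. This secures clauses (1) and (2).

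For clause (3), the strategy is to reduce an arbitrary $V^{\mathbb{P}}$-coloring $d:[\lambda]^k\to r$ to a ground-model coloring and invoke the $\omg$-Erd\H os property of $\lambda$. First I would fix a name $\dot d$ for $d$; by ccc each value $\dot d(\bar\alpha)$ is decided by a countable antichain, hence possesses a countable name $\dot\tau_{\bar\alpha}$ with countable support $s(\bar\alpha)\subseteq\lambda$ in the iteration. A $\Delta$-system argument together with standard thinning lets one code the isomorphism type of $(\bar\alpha,s(\bar\alpha),\dot\tau_{\bar\alpha})$ by a ground-model coloring $c^*:[\lambda]^{n}\to \mu$ for some $n\geq k$ and some $\mu<\lambda$. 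The $\omg$-Erd\H os property in $V$ then yields $W_0\in[\lambda]^{\aleph_1}$ on which $c^*$ is canonical, so that for $\bar\alpha\subseteq W_0$ the name $\dot d(\bar\alpha)$ depends only on the order-theoretic type of $\bar\alpha$ relative to its support in the iteration.

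Finally, I would define $F:W_0\to 2^{\oo}$ by reading off a real from the generic at a coordinate canonically associated with each $\alpha\in W_0$, arranging that (i) $F$ is injective by genericity and (ii) the similarity relation of Definition~\ref{d:similar} applied to $(F(\alpha_i))_{i<k}$ reflects exactly the component of the ground-model isomorphism type on which $c^*$ is constant. Combined with the canonicity of $c^*$ on $W_0$, this delivers $F$-canonicity of $d$ on $W_0$, possibly after passing to an $\aleph_1$-size subset of $W_0$.

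The main obstacle is arranging (ii): Definition~\ref{d:similar} is rigid, tracking three pieces of information at every quadruple of indices (order of splitting levels, lexicographic comparison at each split, and the bit value at each split). One must design the generics introduced by $\mathbb{P}$ so that the natural similarity type of $(F(\alpha_i))_{i<k}$ matches the abstract combinatorial type inherited from the Erd\H os indiscernibles; this seemingly forces the iterands to include (or be interleaved with) Cohen-like posets from which a suitable $F$ can be extracted. Equally delicate is preserving enough of the Erd\H os property through the $\lambda$-length iteration to apply it to colorings computed in $V^{\mathbb{P}}$ rather than in $V$ — a reflection/absoluteness argument connecting names to ground-model objects should handle this, but balancing this preservation with the Knaster book-keeping needed for $\mathrm{MA}_{\aleph_1}$ is where the real work lies.
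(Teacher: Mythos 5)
The paper does not prove this theorem at all: it is quoted as a black box from Shelah's \cite[Theorem 3.1(2)]{wsr2}, and the authors state explicitly that their argument ``only uses independence results as black boxes.'' The only structural information the paper reveals about Shelah's forcing is the remark immediately following the statement: $\mathbb{P}=\mathbb{P}_0 * \mathbb{P}_1$, where $\mathbb{P}_0$ is $<\lambda$-closed of size $\lambda$ and $\mathbb{P}_1\in V^{\mathbb{P}_0}$ is ccc.

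Your proposal already diverges from this in its very first move. You posit a pure finite-support iteration of Knaster posets of size $<\lambda$; the actual construction has a $<\lambda$-closed preparatory stage, which a finite-support ccc iteration cannot absorb (it adds reals and so is not $<\omega_1$-closed, let alone $<\lambda$-closed). That preparation is where the real work lives: it sets up, in the ground model, the structure over $\lambda$ that the subsequent ccc stage will read off to produce the map $F$ and guarantee $F$-canonicity. Without it, there is no reason to expect clause (3) to come out of a Martin's-axiom-style bookkeeping iteration — Sierpi\'nski's coloring shows you can never hope for outright constancy of $d$ on an $\aleph_1$-sized set once $2^{\aleph_0}=\lambda$, so the theorem is exactly a ``next best thing'' canonization, and that refined conclusion does not fall out of generic MA arguments. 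Your Steps~2--4 correctly identify the skeleton (reduce the name of $d$ to a ground-model coloring of higher arity via countable antichains and a $\Delta$-system, apply the $\omg$-Erd\H os relation, then read $F$ off the generic), but the crucial link — designing the iterands so that the similarity type of $(F(\alpha_i))_{i<k}$ in the sense of Definition~\ref{d:similar} provably captures everything the name depends on — is exactly the content of Shelah's theorem, and your sketch only names it as ``the main obstacle'' without resolving it. As written this is an outline of the problem rather than a proof, and its proposed forcing has the wrong shape relative to the one the paper attributes to Shelah.
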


	In particular, the above theorem says that no matter how large the number of colours $r$ is, the restricted colouring $d\uhr [W]^k$ assumes at most as many colours as the number of equivalence classes for similarity. For $k=2$, this gives at most two colours.
	
	We also mention that the forcing $\mb P$ is of the form $\mb P_0*\mb P_1$ where $\mb P_0$ is $<\lambda$-closed  of size $\lambda$ and $\mb P_1\in V^{\mb P_0}$ is ccc. $\mb P$ collapses no cardinals $\leq \lambda$ so the continuum will be very large (i.e., a former large cardinal). In fact, we do not know if Theorem \ref{Shmodel} could hold for say $2^{\aleph_0}=\aleph_{\oo+1}$ or if the use of large cardinals can be avoided in this result.
	
Let us omit the  definition of $\textmd{MA}_{\aleph_1}(\textmd{Knaster})$ as we only need a particular corollary of this axiom.\footnote{The Knaster property and Martin's axiom are covered by \cite{jech} in detail.} 
	
	\begin{thm}
		Suppose $\textmd{MA}_{\aleph_1}(\textmd{Knaster})$. Then for any $r<\oo$ and $g:\oo\times \omg\to r$  there are $A\in [\oo]^\oo$, $B\in [\omg]^{\omg}$so that $g\uhr A\times B$ is constant.
	\end{thm}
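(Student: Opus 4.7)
The plan is to pass from the colouring $g$ to an SFIP $\aleph_1$-family of infinite subsets of $\omega$ by means of a single ultrafilter, apply Bell's theorem to extract a pseudo-intersection, and finish with a pigeonhole to upgrade almost-containment to exact containment.

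To begin, I would fix any nonprincipal ultrafilter $\mathcal U$ on $\omega$ (available in ZFC). For each $\alpha<\omg$ the colour classes $\{g(\cdot,\alpha)^{-1}(c):c<r\}$ partition $\omega$, so exactly one of them lies in $\mathcal U$; let $\tilde c(\alpha)\in r$ denote that colour and set $A_\alpha=g(\cdot,\alpha)^{-1}(\tilde c(\alpha))\in\mathcal U$. A standard pigeonhole on the finite set $r$ then produces some $c^*\in r$ and an uncountable $B_1\subseteq\omg$ with $\tilde c(\alpha)=c^*$ for every $\alpha\in B_1$, so $A_\alpha\in\mathcal U$ whenever $\alpha\in B_1$. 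Since $\mathcal U$ is a filter, every finite intersection $\bigcap_{\alpha\in F}A_\alpha$ with $F\in[B_1]^{<\omega}$ lies in $\mathcal U$ and is in particular infinite, so $\{A_\alpha:\alpha\in B_1\}$ has the strong finite intersection property. This is the pivotal move: the ultrafilter converts an arbitrary $\aleph_1$-family of infinite sets into one with SFIP essentially for free.

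Next I would invoke the hypothesis. Every $\sigma$-centred forcing is Knaster, so $\textmd{MA}_{\aleph_1}(\textmd{Knaster})$ implies $\textmd{MA}_{\aleph_1}(\sigma\text{-centred})$, which by Bell's theorem is equivalent to $\mathfrak p>\aleph_1$. Applying this to the SFIP family above produces a pseudo-intersection: an infinite set $A\subseteq\omega$ with $A\setminus A_\alpha$ finite for every $\alpha\in B_1$. A final pigeonhole finishes the job. Since $[A]^{<\omega}$ is countable, the map $\alpha\mapsto A\setminus A_\alpha$ on the uncountable $B_1$ takes some value $S\in[A]^{<\omega}$ on an uncountable subset $B\subseteq B_1$, and then $A\setminus S$ is infinite with $A\setminus S\subseteq A\cap A_\alpha$, equivalently $g(n,\alpha)=c^*$, for every $(n,\alpha)\in(A\setminus S)\times B$, as required.

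I do not anticipate any serious technical obstacle beyond recognising the ultrafilter trick in the first step; once that is in place the argument is a direct application of Bell's theorem followed by elementary pigeonhole on finite subsets of $A$.
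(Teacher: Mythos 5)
Your argument is correct, and it takes a genuinely different route than the paper, which in fact gives no proof at all but only a chain of citations: $\textmd{MA}_{\aleph_1}(\textmd{Knaster})$ implies $\aleph_1<\mathfrak{s}$ (Blass's handbook chapter), and $\aleph_1<\mathfrak{s}$ implies the polarized relation ${\omg \choose \oo}\to{\omg \choose \oo}^{1,1}_r$ (Garti--Shelah). You instead reduce to $\mathfrak{p}>\aleph_1$: every $\sigma$-centred poset is Knaster, so $\textmd{MA}_{\aleph_1}(\textmd{Knaster})$ gives $\textmd{MA}_{\aleph_1}(\sigma\text{-centred})$, which is $\mathfrak{p}>\aleph_1$ by Bell's theorem; then the ultrafilter converts the $\aleph_1$ colour-classes into an SFIP family, a pseudo-intersection exists, and a countable-to-one pigeonhole upgrades almost-inclusion to actual inclusion on an uncountable tail. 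Every step checks out, including the subtle point that $A\setminus A_\alpha$ ranges over the countable set $[A]^{<\oo}$ so some single finite $S$ is repeated uncountably often. The trade-off between the two routes: since $\mathfrak{p}=\mathfrak{t}\leq\mathfrak{h}\leq\mathfrak{s}$ in ZFC, the Garti--Shelah theorem derives the polarized relation from a provably weaker cardinal-characteristic hypothesis, so it is the sharper result in isolation; but for the purpose at hand (extracting the relation from $\textmd{MA}_{\aleph_1}(\textmd{Knaster})$) either intermediate hypothesis suffices, and your route has the advantage of being short, elementary, and self-contained rather than relying on two external references.
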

	
	 The conclusion above is abbreviated as ${{\omg}\choose{\oo}}\to {{\omg}\choose{\oo}}^{1,1}_r$. For the interested reader, we mention that  $\textmd{MA}_{\aleph_1}(\textmd{Knaster})$ implies $\aleph_1<\mathfrak s$ by \cite[Theorem 7.7]{blass} and the discussion there, and $\aleph_1<\mathfrak s$ implies ${{\omg}\choose{\oo}}\to {{\omg}\choose{\oo}}^{1,1}_r$ by \cite[Claim 2.4]{garti}. The only important thing for us is that ${{\omg}\choose{\oo}}\to {{\omg}\choose{\oo}}^{1,1}_r$ holds in our model.

	\subsection{Proving the Main Theorem}\label{sec:proof}We prove that the conclusion of our Main Theorem holds in any model that satisfies conditions (1)-(3) of Theorem \ref{Shmodel}. So, for the rest of the paper, let us fix a colouring $c:N(2^{\aleph_0})\to r$ with some $r<\oo$.
	
	We use the patterns $s_0,\dots,s_r$ as defined in Section \ref{sec:leader}. Note that the $r+1$ many colourings $c_{s_l}:[2^{\aleph_0}]^{r+l}\to r$ can be coded by a single map $d:[2^{\aleph_0}]^{2r}\to r^r$. In turn, by applying Theorem \ref{Shmodel},  there is some $W\in [2^{\aleph_0}]^{\aleph_1}$ and injective $F:W\to 2^\oo$ so that $c_{s_l}\uhr [W]^{r+l}$ is $F$-canonical for all $l\leq r$. 
	
	\medskip
	
	Our goal now is to define infinite subsets $A_0,\dots,A_{r-1}\subseteq W$ and a family of $r+\ell$-tuples from elements of $\bigcup_{i< r} A_i$ for each $\ell\leq r$ (which we will call \emph{canonical $\ell$-candidates}) so that any two of them are $F$-similar. Steps 1-4 below detail  the definition of canonical $\ell$-candidates  and the construction of the sets $A_l$ which involve controlling the branching pattern and values at branching levels. 
	This will allow us in Step 5 to carry out a similar argument to the proof of Lemma \ref{lm:leader} and find an infinite $X$ with $c\uhr X+X$ constant. 
	
		\medskip
	
\textbf{Step 1.}	We start by selecting $A_0, \dots ,A_{r-1}\subset W$ so that \begin{enumerate}
			\item $F''A_l<F''A_k$ (in $2^\oo$) for all $l<k< r$,
			\item $|A_l|=\aleph_1$ for $l<r$, and
			\item there is an increasing sequence $\nu_l<\oo$ (for $l<r-1$) so that
			\begin{itemize}
				\item $\Delta_F(\alpha,\alpha')=\nu_l$ for all $\alpha\in A_l,\alpha\in A_k$ with $l<k<r$, and
				\item $\Delta_F(\alpha,\alpha')>\nu_{r-2}$ for all $\alpha,\alpha'\in A_l$ with $l<r$.
				
			\end{itemize}
			%
		\end{enumerate}
		
		\begin{figure}[H]
			\psscalebox{0.7 0.7} 
			{
				\begin{pspicture}(0,-3.31)(12.2,3.31)
				\psellipse[linecolor=black, linewidth=0.03, dimen=outer](1.5,2.13)(1.5,0.4)
				\psellipse[linecolor=black, linewidth=0.03, dimen=outer](5.0,2.11)(1.5,0.4)
				\psellipse[linecolor=black, linewidth=0.03, dimen=outer](10.7,2.13)(1.5,0.4)
				\psline[linecolor=black, linewidth=0.03](2.0,-3.07)(10.56,-1.67)
				\psline[linecolor=black, linewidth=0.03](3.24,-2.85)(1.66,-1.21)
				\psline[linecolor=black, linewidth=0.03](6.52,-2.33)(5.2,-0.69)
				\psline[linecolor=black, linewidth=0.03](8.32,-0.39)(9.32,-1.85)
				\psline[linecolor=black, linewidth=0.03, linestyle=dashed, dash=0.17638889cm 0.10583334cm](0.6,1.53)(1.64,-1.21)(2.4,1.53)
				\psline[linecolor=black, linewidth=0.03, linestyle=dashed, dash=0.17638889cm 0.10583334cm](9.8,1.53)(10.6,-0.43)(11.4,1.53)
				\psdots[linecolor=black, dotsize=0.1](7.2,2.13)
				\psdots[linecolor=black, dotsize=0.1](7.6,2.13)
				\psdots[linecolor=black, dotsize=0.1](8.0,2.13)
				\rput[bl](0.88,3.01){$F''A_0$}
				\rput[bl](4.2,2.93){$F''A_1$}
				\rput[bl](10.2,2.93){$F''A_{r-1}$}
				\rput[bl](3.9,-3.31){$\nu_0$}
				\rput[bl](7.18,-2.73){$\nu_1$}
				\rput[bl](9.66,-2.27){$\nu_{r-2}$}
				\psline[linecolor=black, linewidth=0.03, linestyle=dashed, dash=0.17638889cm 0.10583334cm](4.38,1.53)(5.18,-0.69)(5.72,1.57)
				\psline[linecolor=black, linewidth=0.03](10.56,-1.69)(10.6,-0.43)
				\end{pspicture}
			}
			\caption{The $r$ uncountable blocks from $W$ with nicely ordered images on $2^\omega$.}
			\label{fig:blocks}
			%
			%
			%
			
		\end{figure}
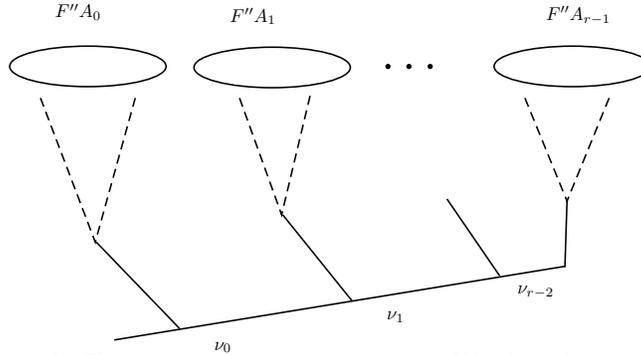

		See Figure \ref{fig:blocks}. Now, this ensures that all tuples $(\alpha_0,\dots ,\alpha_{r-1})_<\in A_0\times \dots \times A_{r-1}$ belong to the same similarity type. Here,  we use the notation $\{\alpha_i\}_<$ or $(\alpha_i)_<$ to denote $\alpha_0<\alpha_1<\dots$, in other words, the fact that the sequence of $\alpha_i$'s is increasing in the ordering of ordinals.  

\begin{dfn}	 An \emph{$\ell$-candidate with respect to $\textbf A=(A_l)_{l< r}$} is an $(r+\ell)$-tuple of elements of $W$ where we take two elements from each of $A_0,\dots, A_{\ell-1}$ and a single point from each $A_k$ for $\ell\leq k< r$.
		\end{dfn}
		
		In particular, a $0$-candidate selects a single element from each $A_l$. Observe that the choice of the $A_l$'s ensures that all $0$-candidates  are $F$-similar and so all elements $s_0*\bar \alpha$ are assigned the same colour by $c$. 
		
\medskip

\textbf{Step 2.}		In general, not all $\ell$-candidates are necessarily similar.
		In particular, we need to look at the new splitting levels $\delta$ (coming from the first $\ell$-many pairs) and make sure that the same values appear when we evaluate other branches at $\delta$. We need to evaluate at branches lying above and below where the splitting occurs. We deal with the former first and the latter in Claim \ref{clm:thin2}. 
		
		\begin{clm}\label{clm:thin1}
			There is $A_l'=\{\alpha_i^l:i\leq\oo\}_<\subseteq A_l$ for each $l< r$ so that 
			\begin{enumerate}
				\item\label{it:op} $F\uhr \bigcup_{l<r}A_l'$ is order preserving and $\sup_{i\in \oo}F(\alpha^l_i)= F(\alpha^l_\oo)$ in $2^\oo$,
				\item\label{it:spl} there are strictly increasing $(\delta^l_i)_{i<\oo}$ so that \begin{itemize}
					\item[(a)] $\Delta_F(\alpha^l_i,\alpha^l_j)=\delta^l_i$ for all $i<j\leq \oo$, and
					\item[(b)]  $\delta^l_i<\delta^k_i<\delta^l_{i+1}$ for $l<k< r$ and $i<\oo$,
				\end{itemize}
				
				\item\label{it:eval} there are $u_l: \{l+1,\dots,r-1\}\to 2$ for $l<r$ such that $u_l(k)=F(\alpha)(\delta^l_i)$ for any $\alpha \in A_k$ and $l<k<r$.
				
			\end{enumerate}
			
		\end{clm}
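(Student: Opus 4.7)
My plan is to construct the $A_l'$ in a sequence of stages, combining condensation and Cantor--Bendixson in $2^\oo$ with layered pigeonhole, and to resolve the crucial interaction in condition~(3) by interleaving the pigeonholes with on-the-fly thinning of the ambient sets.

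First, for each $l<r$ the uncountable $F''A_l\subseteq 2^\oo$ has uncountably many condensation points inside $F''A_l$ by Cantor--Bendixson. Pick $\alpha_\omega^l\in A_l$ with $F(\alpha_\omega^l)$ such a point. Pigeonholing over the four combinations of ``ordinally/lex above or below $\alpha_\omega^l$'', we may refine $A_l$ to an uncountable subset in one fixed case (the other three being symmetric); the condensation at $\alpha_\omega^l$ is preserved. An elementary counting argument then shows that the attainable split depths $\Delta^*_l=\{\delta>\nu_{r-2}:\{\alpha\in A_l:\Delta_F(\alpha,\alpha_\omega^l)=\delta\}\text{ is uncountable}\}$ are cofinal in $\oo$. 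Moreover, the map $\delta\mapsto(F(\alpha_\omega^k)(\delta))_{l<k<r}$ takes only $2^{r-1-l}$ values, so pigeonhole yields a choice of $u_l(k)\in\{0,1\}$ for $k>l$ together with an infinite $D_l\subseteq(\nu_{r-2},\oo)$ on which $F(\alpha_\omega^k)(\delta)=u_l(k)$ for every $\delta\in D_l$ and every $k>l$.

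Inductively in the interleaved order $(0,0),(0,1),\dots,(0,r-1),(1,0),\dots$, I would then pick strictly increasing $\delta_i^l\in D_l\cap\Delta^*_l$ together with distinct witnesses $\alpha_i^l\in A_l$ satisfying $\Delta_F(\alpha_i^l,\alpha_\omega^l)=\delta_i^l$, each chosen to respect the ordinal ordering one wants on the countable set $\bigcup_lA_l'$. Condition~(1) then follows: within each $A_l'$, $F(\alpha_j^l)$ for $j>i$ agrees with $F(\alpha_\omega^l)$ through level $\delta_i^l$ while $F(\alpha_i^l)$ splits lex-below, and between $A_l',A_k'$ the alignment is given by $F''A_l<F''A_k$ from Step~1. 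Condition~(2) is immediate from the construction. For condition~(3), $F(\alpha_\omega^k)(\delta_i^l)=u_l(k)$ by $\delta_i^l\in D_l$, and when $\alpha=\alpha_j^k$ with $j\geq i$ the interleaving forces $\delta_j^k>\delta_i^l$, so $F(\alpha_j^k)(\delta_i^l)=F(\alpha_\omega^k)(\delta_i^l)=u_l(k)$.

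The main obstacle is the remaining case of~(3): when $j<i$, the level $\delta_i^l$ lies above $\alpha_j^k$'s split $\delta_j^k$, so $F(\alpha_j^k)(\delta_i^l)$ is no longer determined by $F(\alpha_\omega^k)$. I would handle this by weaving the above induction together with additional on-the-fly thinning: immediately after each $\alpha_j^k$ is selected, for every $l<k$ I restrict the pool from which subsequent $\alpha_i^l$'s are drawn to those $\alpha\in A_l$ whose split $\Delta_F(\alpha,\alpha_\omega^l)$ also satisfies $F(\alpha_j^k)(\Delta_F(\alpha,\alpha_\omega^l))=u_l(k)$. A further pigeonhole keeps this restricted pool uncountable with $\Delta^*_l$ still cofinal, and since only finitely many $\alpha_j^k$ are in play at any given stage, a finite-fusion argument preserves all invariants throughout.
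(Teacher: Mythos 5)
Your setup handles the case $j\ge i$ of condition~\eqref{it:eval} correctly: since $\delta^k_j>\delta^l_i$, the real $F(\alpha^k_j)$ still agrees with $F(\alpha^k_\oo)$ at level $\delta^l_i\in D_l$, which you have controlled. But the fix you propose for $j<i$ does not close the gap, and it cannot be closed by elementary pigeonhole alone. The value $u_l(k)$ has already been fixed by the tail behaviour of the \emph{single} real $F(\alpha^k_\oo)$, while the constraint imposed by $\alpha^k_j$ on future $\delta^l_i$ concerns the tail of $F(\alpha^k_j)$ strictly above $\delta^k_j=\Delta_F(\alpha^k_j,\alpha^k_\oo)$, which is completely unrelated to $F(\alpha^k_\oo)$. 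Your thinned pool $\{\alpha\in A_l : F(\alpha^k_j)(\Delta_F(\alpha,\alpha^l_\oo))=u_l(k)\}$ may well be only countable: it is entirely possible that $F(\alpha^k_j)(n)=1-u_l(k)$ for every $n\in\Delta^*_l$ above $\delta^k_j$, in which case the restricted pool has only countable-fibre split depths and the induction collapses. The ``further pigeonhole'' you invoke would isolate some value $v$ together with an uncountable sub-pool where $F(\alpha^k_j)(\Delta_F(\cdot,\alpha^l_\oo))=v$, but pigeonhole only \emph{discovers} $v$ — it does not let you choose it — and nothing forces $v=u_l(k)$. Freedom in choosing $\alpha^k_j$ does not help either, since the tails of the uncountably many candidates $\alpha\in A_k$ with $\Delta_F(\alpha,\alpha^k_\oo)=\delta^k_j$ are again unconstrained; and ``finite fusion'' is beside the point, since a single badly behaved $\alpha^k_j$ already kills the invariant.

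What you are missing is precisely the device the paper uses at this step: the polarized partition relation ${{\omg}\choose{\oo}}\to {{\omg}\choose{\oo}}^{1,1}_2$, available in the model because $\textmd{MA}_{\aleph_1}(\textmd{Knaster})$ holds (via $\aleph_1<\mathfrak{s}$). Applied to the map $g_{lk}(\alpha^l_i,\beta)=F(\beta)(\delta^l_i)$ on $(A_l''\setm\{\max A_l''\})\times A_k$, it uniformizes \emph{simultaneously} over an infinite set of split levels and an uncountable $B\subseteq A_k$; that is what lets one fix $u_l(k)$ once and for all while retaining an uncountable $A_k$ for later stages. This relation can fail in ZFC (e.g.\ under CH), so iterated pigeonhole and Cantor--Bendixson arguments cannot substitute for it — Claim~\ref{clm:thin1} is exactly where the special model is used. (A secondary slip: your pigeonhole should be carried out inside $\Delta^*_l$ rather than over all $\delta>\nu_{r-2}$, since otherwise $D_l\cap\Delta^*_l$ need not be infinite; this part, at least, is easily repaired.)
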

		
		See Figure \ref{fig:thin1} for a picture; we have marked the important splitting levels on later branches, with the same symbols marking the same values.

		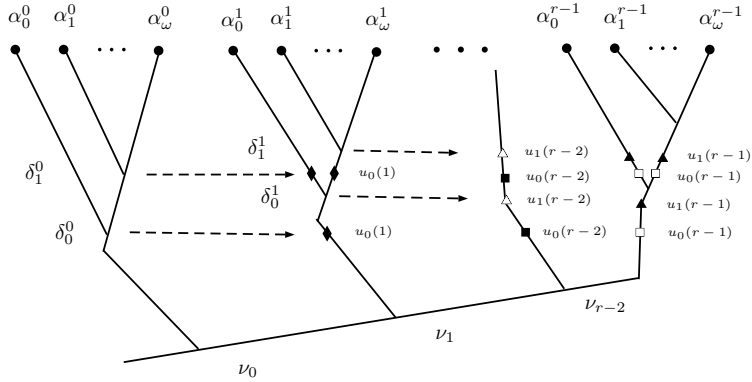
\begin{figure}[H]
			\psscalebox{0.8 0.8} 
			{
				\begin{pspicture}(0,-3.115)(12.32,3.115)
				\psline[linecolor=black, linewidth=0.03](1.92,-2.875)(10.48,-1.475)
				\psline[linecolor=black, linewidth=0.03](3.16,-2.655)(1.58,-1.015)
				\psline[linecolor=black, linewidth=0.03](6.44,-2.135)(5.12,-0.495)
				\psline[linecolor=black, linewidth=0.03](8.24,-0.195)(9.24,-1.655)
				\psdots[linecolor=black, dotsize=0.1](7.12,2.325)
				\psdots[linecolor=black, dotsize=0.1](7.52,2.325)
				\psdots[linecolor=black, dotsize=0.1](7.92,2.325)
				\rput[bl](3.82,-3.115){$\nu_0$}
				\rput[bl](7.1,-2.535){$\nu_1$}
				\rput[bl](9.58,-2.075){$\nu_{r-2}$}
				\psline[linecolor=black, linewidth=0.03](10.48,-1.495)(10.52,-0.235)
				\psdots[linecolor=black, dotsize=0.16](4.54,2.305)
				\psdots[linecolor=black, dotsize=0.16](3.74,2.305)
				\psdots[linecolor=black, dotsize=0.16](6.12,2.285)
				\psdots[linecolor=black, dotsize=0.06](5.12,2.305)
				\psdots[linecolor=black, dotsize=0.06](5.32,2.305)
				\psdots[linecolor=black, dotsize=0.06](5.52,2.305)
				\psline[linecolor=black, linewidth=0.03](5.14,-0.535)(6.1,2.285)
				\psline[linecolor=black, linewidth=0.03, linestyle=dashed, dash=0.17638889cm 0.10583334cm, arrowsize=0.05291667cm 2.0,arrowlength=1.4,arrowinset=0.0]{->}(2.02,-0.715)(4.76,-0.755)
				\psline[linecolor=black, linewidth=0.03](3.76,2.285)(5.28,-0.115)
				\psline[linecolor=black, linewidth=0.03, linestyle=dashed, dash=0.17638889cm 0.10583334cm, arrowsize=0.05291667cm 2.0,arrowlength=1.4,arrowinset=0.0]{->}(2.3,0.245)(4.76,0.245)
				\psline[linecolor=black, linewidth=0.03](4.58,2.265)(5.54,0.625)
				\psline[linecolor=black, linewidth=0.03](0.96,2.285)(1.92,0.245)
				\psdots[linecolor=black, dotsize=0.06](1.52,2.325)
				\psdots[linecolor=black, dotsize=0.06](1.72,2.325)
				\psdots[linecolor=black, dotsize=0.06](1.92,2.325)
				\psdots[linecolor=black, dotsize=0.16](2.5,2.305)
				\psdots[linecolor=black, dotsize=0.16](0.12,2.325)
				\psdots[linecolor=black, dotsize=0.16](0.92,2.325)
				\psline[linecolor=black, linewidth=0.03](0.12,2.305)(1.64,-0.755)
				\psline[linecolor=black, linewidth=0.03](1.58,-1.035)(2.52,2.325)
				\psline[linecolor=black, linewidth=0.03](10.52,-0.255)(11.68,2.345)
				\psline[linecolor=black, linewidth=0.03](9.28,2.325)(10.64,0.005)
				\psdots[linecolor=black, dotsize=0.16](10.08,2.345)
				\psdots[linecolor=black, dotsize=0.16](9.28,2.345)
				\psdots[linecolor=black, dotsize=0.16](11.66,2.325)
				\psdots[linecolor=black, dotsize=0.06](10.68,2.345)
				\psdots[linecolor=black, dotsize=0.06](10.88,2.345)
				\psdots[linecolor=black, dotsize=0.06](11.08,2.345)
				\psline[linecolor=black, linewidth=0.03](10.12,2.305)(11.1,1.105)
				\psline[linecolor=black, linewidth=0.03, linestyle=dashed, dash=0.17638889cm 0.10583334cm, arrowsize=0.05291667cm 2.0,arrowlength=1.4,arrowinset=0.0]{->}(5.5,-0.115)(7.64,-0.155)
				\psline[linecolor=black, linewidth=0.03, linestyle=dashed, dash=0.17638889cm 0.10583334cm, arrowsize=0.05291667cm 2.0,arrowlength=1.4,arrowinset=0.0]{->}(5.74,0.645)(7.48,0.605)
				\psline[linecolor=black, linewidth=0.03](8.26,-0.215)(8.1,1.985)
				\psdots[linecolor=black, dotstyle=diamond*, dotsize=0.18](5.3,-0.735)
				\psdots[linecolor=black, dotstyle=square*, dotsize=0.16](8.6,-0.715)
				\psdots[linecolor=black, dotstyle=square, dotsize=0.16, fillcolor=white](10.5,-0.715)
				\psdots[linecolor=black, dotstyle=diamond*, dotsize=0.18](5.04,0.265)
				\psdots[linecolor=black, dotstyle=diamond*, dotsize=0.18](5.42,0.265)
				\psdots[linecolor=black, dotstyle=square*, dotsize=0.16](8.26,0.185)
				\psdots[linecolor=black, dotstyle=triangle, dotsize=0.16, fillcolor=white](8.22,0.585)
				\psdots[linecolor=black, dotstyle=triangle*, dotsize=0.16](10.88,0.505)
				\psdots[linecolor=black, dotstyle=triangle*, dotsize=0.16](10.32,0.525)
				\psdots[linecolor=black, dotstyle=triangle, dotsize=0.16, fillcolor=white](8.28,-0.195)
				\psdots[linecolor=black, dotstyle=triangle*, dotsize=0.16](10.52,-0.255)
				\rput[bl](5.76,-0.815){\tiny{$u_0(1)$}}
				\rput[bl](10.96,-0.835){\tiny{$u_0(r-1)$}}
				\rput[bl](8.88,-0.815){\tiny{$u_0(r-2)$}}
				\rput[bl](5.82,0.165){\tiny{$u_0(1)$}}
				\rput[bl](8.6,0.105){\tiny{$u_0(r-2)$}}
				\rput[bl](8.62,-0.275){\tiny{$u_1(r-2)$}}
				\rput[bl](8.58,0.525){\tiny{$u_1(r-2)$}}
				\rput[bl](10.94,-0.355){\tiny{$u_1(r-1)$}}
				\rput[bl](11.11,0.105){\tiny{$u_0(r-1)$}}
				\rput[bl](11.28,0.445){\tiny{$u_1(r-1)$}}
				\rput[bl](0.0,2.685){$\alpha^0_0$}
				\rput[bl](0.76,2.705){$\alpha^0_1$}
				\rput[bl](2.32,2.665){$\alpha^0_\omega$}
				\rput[bl](4.36,2.665){$\alpha^1_1$}
				\rput[bl](3.54,2.625){$\alpha^1_0$}
				\rput[bl](5.94,2.645){$\alpha^1_\omega$}
				\rput[bl](9.9,2.665){$\alpha^{r-1}_1$}
				\rput[bl](8.78,2.685){$\alpha^{r-1}_0$}
				\rput[bl](11.48,2.665){$\alpha^{r-1}_\omega$}
				\psdots[linecolor=black, dotstyle=square, dotsize=0.16, fillcolor=white ](10.48,0.265)
				\psdots[linecolor=black, dotstyle=square, dotsize=0.16, fillcolor=white](10.76,0.265)
				\rput[bl](0.78,-0.915){$\delta^0_0$}
				\rput[bl](0.28,0.085){$\delta^0_1$}
				\rput[bl](4.2,-0.315){$\delta^1_0$}
				\rput[bl](3.98,0.505){$\delta^1_1$}
				\end{pspicture}
			}
			\caption{Selecting subsequences with the important positions marked.}
			\label{fig:thin1}
		\end{figure}

		\begin{proof}
			First, shrink $A_0$ to some $A''_0$ of type $\oo+1$ so that $F\uhr A''_0$ is increasing, continuous, and  (\ref{it:spl})(a) is satisfied; next, replace $A_1,\dots,A_{r-1}$ by their uncountable subsets so that every element of their union is greater than every element of $A''_0$ (we use the notation $A_1,\dots,A_{r-1}$ for the shrinked sets as well). Note that (\ref{it:spl})(a) will still hold no matter how we shrink $A_0''$ further (just the splitting sequence is redefined).
			
			Define $$g_{0k}: (A_0''\setm \{\max A''_0\})\times A_k\to 2$$ for $0<k<r$ by $g_{0k}(\alpha^0_i,\beta)=F(\beta)(\delta^0_i)\in 2$. Now, apply  ${{\omg}\choose{\oo}}\to {{\omg}\choose{\oo}}^{1,1}_2$ to each $g_{01},g_{02},\dots ,g_{0r-1}$, successively  shrinking $A_0''\setm \{\max A_0''\}$ $(r-1)$-many times and each $A_k$ once. The constant values $u_0(k)$ for $g_k$ will define  $u_0:{r\setm 1}\to 2$. Note that we  made sure that (\ref{it:eval}) is satisfied no matter how we shrink $A''_0$ or $A_l$ for $0<l<r-1$ in later steps.
			
			We move onto $A_1$ and proceed similarly: first, shrink $A_1$ to a type $\oo+1$ sequence so that the restriction of $F$ is increasing, continuous, and  (\ref{it:spl})(a) is satisfied. Then shrink the sets $A_2,\dots,A_{r-1}$ so that every element of their union is greater than the elements of $A_1$. Next, shrink each set further so that (\ref{it:eval}) holds for $l=1$ using ${{\omg}\choose{\oo}}\to {{\omg}\choose{\oo}}^{1,1}_2$; this defines $u_1$. Then we move to $A_2$ etc.
			
			Notice that this process yields $A_l'\subseteq A_l$ so that  (\ref{it:op}), (\ref{it:spl})(a) and (\ref{it:eval}) holds. Finally, we do a final (simultaneous) shrinking to ensure (\ref{it:spl})(b).
			
		\end{proof}
		
		Again, to make notation lighter, we drop the primes i.e., we forget about the original uncountable $A_l$ defined in Step 2, and call our new $\oo+1$-sequences from Claim \ref{clm:thin1} $A_l$ (instead of $A_l'$).
		
		\medskip
		
		Now, what decides the similarity type of an $\ell$-candidate with respect to this new sequence $\textbf A=(A_l)_{l< r}$? Firstly, we will focus only on $\ell$-candidates of a certain specific form:
			\begin{dfn}
				A \emph{canonical $\ell$-candidate} is an $\ell$-candidate with respect to $\textbf A$ of the form $$\bar \alpha =(\alpha^0_{i_0},\alpha^0_{j_0},\dots, \alpha^{\ell-1}_{i_{\ell-1}},\alpha^{\ell-1}_{j_{\ell-1}},\alpha^\ell_{i_\ell},\dots, \alpha^{r-1}_{i_{r-1}})$$ so that \begin{enumerate}
					\item  $i_k<j_k\leq \oo$ for $k<\ell$ and $i_0\leq i_1\leq \dots \leq i_{r-1}\leq \oo$, and
					\item if $j_k\neq \oo$ then $\delta^k_{j_k}>\delta^{\ell-1}_{i_{\ell-1}}$.
				\end{enumerate}
		\end{dfn}
		
		For example, if we fix $\ell<k<r$ and $i<\oo$ then  $$\{\alpha^l_0,\alpha^{l}_\oo:l<\ell\}\cup\{\alpha^l_{i}:\ell\leq l<k\}\cup \{\alpha^l_\oo:k\leq l<r\}$$ is a canonical $\ell$-candidate. The corresponding indices are $i_l=0<j_l=\oo$ for $l<\ell$, $i_l=i$ for $\ell\leq l<k$ and $i_k=\oo$ for $k\leq l<r$. Condition (2) is vacuously satisfied. These particular canonical $\ell$-candidates will have an important role later. 
\medskip

\textbf{Step 3.} Now, note that any 0-candidate is a canonical 0-candidate and they are all pairwise $F$-similar. Our next claim is that for a fixed sequence of $i$'s the $F$-similarity type of a canonical $\ell$-candidate does not depend on the choice of the $j$'s.
			
			\begin{clm} \label{c:stype}Let $\textbf i=\{i_0\leq i_1 \leq \dots \leq i_{r-1}\}$  be a sequence of ordinals $\leq \omega$ and $\bar \alpha =(\alpha^0_{i_0},\alpha^0_{j_0},\dots, \alpha^{\ell-1}_{i_{\ell-1}},\alpha^{\ell-1}_{j_{\ell-1}},\alpha^\ell_{i_\ell},\dots, \alpha^{r-1}_{i_{r-1}})$ and $\bar \alpha' =(\alpha^0_{i_0},\alpha^0_{j'_0},\dots, \alpha^{\ell-1}_{i_{\ell-1}},\alpha^{\ell-1}_{j'_{\ell-1}},\alpha^\ell_{i_\ell},\dots, \alpha^{r-1}_{i_{r-1}})$ be two canonical $\ell$-candidates (for some sequences $(j_l)_{l<\ell}$ and $(j'_l)_{l<\ell}$). Then $\bar \alpha$ and $\bar \alpha'$ are $F$-similar.
			\end{clm}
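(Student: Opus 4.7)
The plan is to verify the three clauses of Definition \ref{d:similar} directly for the $F$-images of $\bar\alpha$ and $\bar\alpha'$. First I catalogue the possible $\Delta_F$-values between two coordinates of $\bar\alpha$: a cross-block pair (one coordinate in $A_l$, the other in $A_k$, $l<k$) contributes $\nu_l$, while a same-block pair necessarily has the form $(\alpha^l_{i_l},\alpha^l_{j_l})$ with $l<\ell$ and contributes $\delta^l_{i_l}$. Since every $\nu_{\cdot}$ lies below every $\delta^{\cdot}_{\cdot}$ by Step 1, and since $\delta^l_{i_l}$ is (weakly) increasing in $l$ under the hypothesis $i_0\leq\dots\leq i_{\ell-1}$ together with Claim \ref{clm:thin1}(\ref{it:spl})(b), the largest such value is $\delta^{\ell-1}_{i_{\ell-1}}$; in particular every splitting level $n=\Delta_F(\bar\alpha(l_1),\bar\alpha(l_2))$ satisfies $n\leq \delta^{\ell-1}_{i_{\ell-1}}$. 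None of these values depends on the $j$'s, and the exact same multiset of $\Delta_F$-values arises for $\bar\alpha'$, so clause (1) of similarity is immediate.

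The key step is the agreement
\[
F(\bar\alpha(l))\uhr\bigl(\delta^{\ell-1}_{i_{\ell-1}}+1\bigr)=F(\bar\alpha'(l))\uhr\bigl(\delta^{\ell-1}_{i_{\ell-1}}+1\bigr)
\]
for every coordinate index $l$ of the tuple. For $i$-type positions this is trivial since $\bar\alpha(l)=\bar\alpha'(l)$. For a $j$-type coordinate $\alpha^k_{j_k}$ with $k<\ell$ and $j_k<\omega$, Claim \ref{clm:thin1}(\ref{it:op}) and the equality $\Delta_F(\alpha^k_{j_k},\alpha^k_\omega)=\delta^k_{j_k}$ force $F(\alpha^k_{j_k})(m)=F(\alpha^k_\omega)(m)$ for every $m<\delta^k_{j_k}$; the canonicality requirement $\delta^k_{j_k}>\delta^{\ell-1}_{i_{\ell-1}}$ then extends this agreement through position $\delta^{\ell-1}_{i_{\ell-1}}$. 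The same argument applies to $\alpha^k_{j'_k}$, the case $j_k=\omega$ (or $j'_k=\omega$) being just one side of the chain. Transitivity through $\alpha^k_\omega$ yields the displayed equality.

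With the agreement in hand, clauses (2) and (3) become pure bookkeeping: every relevant $n$ satisfies $n\leq \delta^{\ell-1}_{i_{\ell-1}}$, so $F(\bar\alpha(l))\uhr n = F(\bar\alpha'(l))\uhr n$ and $F(\bar\alpha(l))(n)=F(\bar\alpha'(l))(n)$ for each coordinate $l$; hence the lexicographic comparisons in (2) and the bit values in (3) transfer verbatim from $\bar\alpha$ to $\bar\alpha'$. The only delicate point is exactly the canonicality clause $\delta^k_{j_k}>\delta^{\ell-1}_{i_{\ell-1}}$: without it one could take $n=\delta^{\ell-1}_{i_{\ell-1}}$ and witness a $j$-type coordinate disagree between $\bar\alpha$ and $\bar\alpha'$ at position $n$, violating (3). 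So this clause is precisely what the definition of canonical $\ell$-candidate is designed to guarantee, and once it is in place the verification is mechanical.
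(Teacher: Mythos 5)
Your proof is correct and is essentially the same argument as the paper's: both hinge on the observation that, because canonicality forces $\delta^k_{j_k}>\delta^{\ell-1}_{i_{\ell-1}}$, the $j$-type coordinates $\alpha^k_{j_k}$ and $\alpha^k_{j'_k}$ agree with $\alpha^k_\omega$ (hence with each other) on all positions $\leq\delta^{\ell-1}_{i_{\ell-1}}$, which is an upper bound for every splitting level arising in the tuple. The paper phrases this by isolating the $2k$-sequences of bit values that determine the similarity type and showing they are $j$-independent via the $\alpha^k_\omega$ intermediary, whereas you establish coordinatewise agreement of the restrictions $F(\bar\alpha(l))\uhr(\delta^{\ell-1}_{i_{\ell-1}}+1)$ and read off all three similarity clauses from that; this is the same mechanism, just spelled out a bit more explicitly (in particular you verify clause (1) in detail, which the paper leaves implicit).
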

			\begin{proof}
				Observe first that for a canonical $\ell$-candidate $\bar \alpha$, the similarity type of $F(\bar \alpha)$ is decided by the values of the $2k$-sequences  $$(F(\alpha^0_{i_0})(\delta^k_{i_k}),F(\alpha^0_{j_0})(\delta^k_{i_k}),\dots, F(\alpha^{k-1}_{i_{k-1}})(\delta^k_{i_k}),F(\alpha^{k-1}_{j_{k-1}})(\delta^k_{i_k}))$$ for each $k<\ell$ (see Figure \ref{fig:blocks2}): indeed, by the definition of similarity we only need to consider the values of the reals in $F''\bar \alpha$ at splitting levels and \eqref{it:eval} of Claim \ref{clm:thin1} guarantees that for the splitting at $\delta^k_{i_k}$ we only need to check the values of reals below $F''A_k$. Also, as $\Delta_F(\alpha^{m}_{j_{m}},\alpha^{m}_\oo)=\delta^m_{j_m}>\delta^k_{i_k}$ whenever $m<k$ and $\alpha^{m}_{j_{m}}\neq \alpha^{m}_\oo$, we get that $$(F(\alpha^0_{i_0})(\delta^k_{i_k}),F(\alpha^0_\oo)(\delta^k_{i_k}),\dots, F(\alpha^{k-1}_{i_{k-1}})(\delta^k_{i_k}),F(\alpha^{k-1}_\oo)(\delta^k_{i_k}))=$$
				$$=(F(\alpha^0_{i_0})(\delta^k_{i_k}),F(\alpha^0_{j_0})(\delta^k_{i_k}),\dots, F(\alpha^{k-1}_{i_{k-1}})(\delta^k_{i_k}),F(\alpha^{k-1}_{j_{k-1}})(\delta^k_{i_k})).$$
				
				The first part of the equation does not depend on the sequence $j_0,\dots,j_{l-1}$ so the second part neither. This, and our first observation, finishes the proof of the claim.

			\end{proof}

		\begin{figure}[H]
			\psscalebox{0.7 0.7}
			{
				\begin{pspicture}(0,-2.9973986)(15.54,2.9973986)
				\psline[linecolor=black, linewidth=0.03](1.92,-2.9826014)(14.08,-0.9626013)
				\psline[linecolor=black, linewidth=0.03](3.16,-2.7626014)(1.58,-1.1226013)
				\psline[linecolor=black, linewidth=0.03](6.44,-2.2426014)(5.12,-0.6026013)
				\psdots[linecolor=black, dotsize=0.16](3.74,2.1973987)
				\psdots[linecolor=black, dotsize=0.16](6.12,2.1773987)
				\psline[linecolor=black, linewidth=0.03](5.14,-0.6426013)(6.1,2.1773987)
				\psline[linecolor=black, linewidth=0.03](3.76,2.1773987)(5.22,-0.38260132)
				\psdots[linecolor=black, dotsize=0.16](2.5,2.1973987)
				\psdots[linecolor=black, dotsize=0.16](0.12,2.2173986)
				\psline[linecolor=black, linewidth=0.03](0.12,2.1973987)(1.64,-0.86260134)
				\psline[linecolor=black, linewidth=0.03](1.58,-1.1426014)(2.52,2.2173986)
				\rput[bl](0.0,2.5773988){$\alpha^0_{i_0}$}
				\rput[bl](1.16,2.5573988){$\alpha^0_{j_0}$}
				\rput[bl](2.32,2.5573988){$\alpha^0_\omega$}
				\rput[bl](4.64,2.4973986){$\alpha^1_{j_1}$}
				\rput[bl](3.54,2.5173986){$\alpha^1_{i_1}$}
				\rput[bl](5.94,2.5373986){$\alpha^1_\omega$}
				\rput[bl](0.66,-0.86260134){$\delta^1_{i_1}$}
				\rput[bl](0.2,-0.14260131){$\delta^{l-1}_{i_{l-1}}$}
				\psline[linecolor=black, linewidth=0.03](9.46,-1.7226013)(9.5,-0.38260132)
				\psline[linecolor=black, linewidth=0.03](9.5,-0.40260133)(10.66,2.1973987)
				\psline[linecolor=black, linewidth=0.03](8.26,2.1773987)(9.7,0.09739868)
				\psdots[linecolor=black, dotsize=0.16](8.26,2.1973987)
				\psdots[linecolor=black, dotsize=0.16](10.64,2.1773987)
				\rput[bl](8.88,2.5173986){$\alpha^{l-1}_{j_{l-1}}$}
				\rput[bl](7.76,2.5373986){$\alpha^{l-1}_{i_{l-1}}$}
				\rput[bl](10.46,2.5173986){$\alpha^{l-1}_\omega$}
				\psdots[linecolor=black, dotsize=0.06](7.08,2.1973987)
				\psdots[linecolor=black, dotsize=0.06](7.28,2.1973987)
				\psdots[linecolor=black, dotsize=0.06](7.48,2.1973987)
				\psdots[linecolor=black, dotsize=0.16644828](1.3,2.2173986)
				\psline[linecolor=black, linewidth=0.03](1.340233,2.1771657)(2.0989225,0.5749503)
				\psline[linecolor=black, linewidth=0.03](4.9,2.1373987)(5.64,0.8373987)
				\psdots[linecolor=black, dotsize=0.16](4.86,2.1773987)
				\psline[linecolor=black, linewidth=0.03](14.1,-0.9626013)(14.88,2.2173986)
				\psline[linecolor=black, linewidth=0.03](12.18,-1.3026013)(12.74,2.2373986)
				\psdots[linecolor=black, dotsize=0.16](12.74,2.1973987)
				\psdots[linecolor=black, dotsize=0.16](14.9,2.1773987)
				\psdots[linecolor=black, dotsize=0.16](9.06,2.1973987)
				\psline[linecolor=black, linewidth=0.03](9.1,2.1573987)(10.02,0.81739867)
				\psdots[linecolor=black, dotstyle=square*, dotsize=0.16](1.38,-0.3626013)
				\psdots[linecolor=black, dotstyle=square*, dotsize=0.16](1.82,-0.38260132)
				\psdots[linecolor=black, dotstyle=triangle*, dotsize=0.16](1.18,0.05739868)
				\psdots[linecolor=black, dotstyle=triangle*, dotsize=0.16](1.9,0.03739868)
				\psdots[linecolor=black, dotstyle=triangle*, dotsize=0.16](4.98,0.07739868)
				\psdots[linecolor=black, dotstyle=triangle*, dotsize=0.16](5.42,0.07739868)
				\psline[linecolor=black, linewidth=0.03, linestyle=dashed, dash=0.17638889cm 0.10583334cm, arrowsize=0.05291667cm 2.0,arrowlength=1.4,arrowinset=0.0]{->}(4.76,-0.3626013)(2.12,-0.3626013)
				\psline[linecolor=black, linewidth=0.03, linestyle=dashed, dash=0.17638889cm 0.10583334cm, arrowsize=0.05291667cm 2.0,arrowlength=1.4,arrowinset=0.0]{->}(9.32,0.07739868)(5.96,0.07739868)
				\psdots[linecolor=black, dotsize=0.0](0.4,-0.24260132)
				\psdots[linecolor=black, dotsize=0.0](0.54,-0.38260132)
				\rput[bl](12.5,2.5573988){$\alpha^{k}_{i_{k}}$}
				\rput[bl](14.72,2.5373986){$\alpha^{r-1}_{i_{r-1}}$}
				\end{pspicture}
			}
			\caption{The important positions marked for deciding similarity types.}
			\label{fig:blocks2}
		\end{figure}
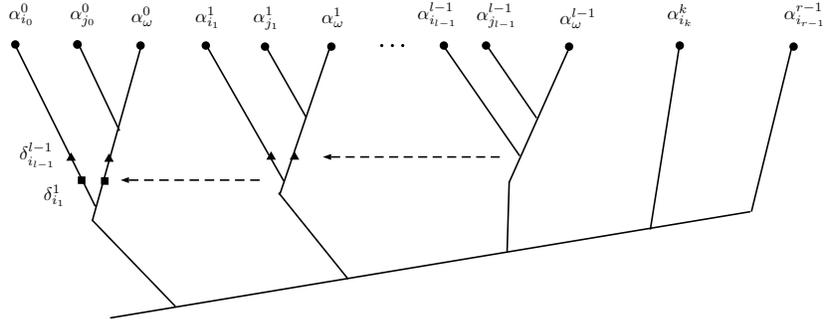


		For a  sequence $\textbf i=\{i_0\leq i_1 \leq \dots \leq i_{r-1}\}$, let us define  $\ev{}{\textbf i}{k}:2k\to 2$  by $$\ev{}{\textbf i}{k}(2m)=F(\alpha^{m}_{i_m})(\delta^{k}_{i_k}) \textmd{ and }\ev{}{\textbf i}{k}(2m+1)=F(\alpha^{m}_\oo)(\delta^{k}_{i_k})$$ for $m<k$ and $i_k\neq \oo$; if $i_k=0$, we declare $\ev{}{\textbf i}{k}=\emptyset$.
		
		If we work with multiple $\textbf A$ sequences at the same time then we mark the above functions as $\ev{\textbf A}{\textbf i}{k}$.
				 With this new notation, in Claim \ref{c:stype} we argued that the $F$-similarity type of a canonical $\ell$-candidate with respect to $\textbf A$ is decided solely by the sequence $(\ev{\textbf A}{\textbf i}{k})_{k<r}$.

		%
		%
		
		\medskip
		
		\textbf{Step 4.} Next, we thin out each $A_l$ further so that the function $\ev{}{\cdot}{k}$, at a fixed $k$, gives the same values for \emph{any choice} of  $\textbf i$.
		
		\begin{clm}\label{clm:thin2}
			There are $\tilde A_l=\{\tilde \alpha^l_i:i\leq \oo\}_<\subseteq A_l$ for $l<r$ so that $\tilde{\textbf A}=(\tilde A_l)_{l<r}$ still satisfies the conditions of Claim \ref{clm:thin1} and there are $\mf e(k):2k\to 2 $ so that for any $\textbf i=\{i_0\leq i_1 \leq \dots \leq i_{r-1} \}$  $$\ev{\tilde {\textbf {A}}}{\textbf i}{k}=\mf e(k).$$
			
		\end{clm}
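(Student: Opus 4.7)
The plan is to thin each $\omega+1$-sequence $A_l$ down to a subsequence $\tilde A_l$ of the same order type via finitely many applications of Ramsey's theorem for pairs and the pigeonhole principle to auxiliary colorings on $\omega$, then re-enumerate. Since the $A_l$ are now countable, the partition relation ${{\omg}\choose{\oo}}\to{{\omg}\choose{\oo}}^{1,1}_r$ used in Claim \ref{clm:thin1} no longer applies and everything must happen inside $\omega$.

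For each $k<r$ with $i_k<\oo$, $\ev{\textbf A}{\textbf i}{k}$ is assembled from binary values of the form $F(\alpha^m_{i_m})(\delta^k_{i_k})$ at even positions $2m$ and $F(\alpha^m_\oo)(\delta^k_{i_k})$ at odd positions $2m+1$, for $m<k$ and $i_m\leq i_k$; the goal is to thin $\omega$ so that these bits depend only on $(m,k)$. For each pair $0\leq m<k<r$ I consider three finite colorings,
\[
h_{m,k}(\{a,b\}_<)=F(\alpha^m_a)(\delta^k_b),\quad g^{\mathrm{d}}_{m,k}(a)=F(\alpha^m_a)(\delta^k_a),\quad g^{\oo}_{m,k}(b)=F(\alpha^m_\oo)(\delta^k_b),
\]
and applying Ramsey's theorem to each $h_{m,k}$ followed by pigeonhole on each $g^{\mathrm{d}}_{m,k}$ and $g^{\oo}_{m,k}$ in sequence (finitely many steps) produces an infinite $J\subseteq\omega$ on which every one of these colorings is constant.

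Set $\tilde A_l=\{\alpha^l_j:j\in J\}\cup\{\alpha^l_\oo\}$, re-enumerated as $\tilde\alpha^l_n=\alpha^l_{j_n}$ for $J=\{j_0<j_1<\dots\}$ and $\tilde\alpha^l_\oo=\alpha^l_\oo$. The new splitting levels are $\tilde\delta^l_n=\delta^l_{j_n}$, so the interleaving $\tilde\delta^l_n<\tilde\delta^k_n<\tilde\delta^l_{n+1}$ follows from $\delta^k_{j_n}<\delta^l_{j_n+1}\leq\delta^l_{j_{n+1}}$ because $j_{n+1}\geq j_n+1$; order-preservation and continuity of $F$ and the constants $u_l(k)$ pass to subsequences immediately. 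Hence $\tilde{\textbf A}$ continues to satisfy the conditions of Claim \ref{clm:thin1}. Defining $\mathfrak{e}(k)(2m)$ as the common constant value from $h_{m,k}$ and $g^{\mathrm{d}}_{m,k}$ and $\mathfrak{e}(k)(2m+1)$ as the constant from $g^{\oo}_{m,k}$, a case split on $i_m<i_k$ versus $i_m=i_k$ in the thinned index set gives $\ev{\tilde{\textbf A}}{\textbf i}{k}=\mathfrak{e}(k)$ for every admissible $\textbf i$.

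The principal obstacle is making the pair-Ramsey constant from $h_{m,k}$ agree with the diagonal-pigeonhole constant from $g^{\mathrm{d}}_{m,k}$: Ramsey on $[\omega]^2$ controls the off-diagonal behavior while pigeonhole on $\omega$ controls the diagonal, and a priori the two thinnings could leave different constants. I would address this by coupling the two into a single thinning --- enlarging the Ramsey coloring on pairs to record the diagonal value of the smaller coordinate as an extra coordinate, so that one Ramsey application simultaneously fixes both values and an iterated refinement can align the constants.
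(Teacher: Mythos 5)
Your decomposition is genuinely different from the paper's. The paper packages all the evaluation data into one colouring $\psi:[\oo]^r\to\prod_{k<r}\bigl(2^{2k}\cup\{\emptyset\}\bigr)$, applies the classical $r$-dimensional Ramsey theorem once to obtain a homogeneous $I\in[\oo]^\oo$, and sets $\tilde A_l=\{\alpha^l_i:i\in I\cup\{\oo\}\}$; you instead run a separate two-dimensional Ramsey together with pigeonhole for each pair $(m,k)$. Both routes control $\ev{\tilde{\textbf A}}{\textbf i}{k}$ on strictly increasing $\textbf i$ equally well, and your check that the conditions of Claim~\ref{clm:thin1} are inherited by the subsequence (in particular the interleaving of the $\delta$'s) is correct.

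The diagonal concern you raise is real --- in fact it is also present in the paper's argument, since constancy of $\psi$ on $[I]^r$ only addresses strictly increasing tuples while the statement of the claim and its use in Step~5 quantify over weakly increasing $\textbf i$ with ties. However, your proposed remedy does not close the gap. Recording $F(\alpha^m_a)(\delta^k_a)$ as an extra coordinate of the pair colouring makes that quantity constant on the homogeneous set, and the off-diagonal $F(\alpha^m_a)(\delta^k_b)$ for $a<b$ constant as well, but two separately constant quantities need not take the same value, and further Ramsey or pigeonhole refinements can only preserve, never create, such an equality. There is also no structural reason for them to coincide: $\delta^k_a>\delta^m_a$, so $F(\alpha^m_a)(\delta^k_a)$ lies past the branching point of $\alpha^m_a$, where nothing in Claim~\ref{clm:thin1} constrains $F$. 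What does eliminate the diagonal cases is a staircase re-indexing rather than a coupling: after the Ramsey thinning, relabel $I$ as $\{n:n<\oo\}$ and take $\tilde\alpha^l_n=\alpha^l_{nr+l}$, $\tilde\alpha^l_\oo=\alpha^l_\oo$. The interleaving of Claim~\ref{clm:thin1} survives because $nr+m<nr+k<(n+1)r+m$ whenever $m<k<r$, and for any $m<k$ and any $i_m\leq i_k$ the original indices $i_mr+m<i_kr+k$ are strictly increasing, so every evaluation in $\ev{\tilde{\textbf A}}{\textbf i}{k}$ is an off-diagonal instance already controlled by Ramsey.
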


		\begin{proof}We can do this by a simple application of the classical ($r$-dimensional) Ramsey theorem. Note that we associated with any choice of $\textbf i=\{i_0\leq i_1 \leq \dots \leq i_{r-1} \}\in [\oo]^r$ a sequence $\psi(\textbf i)=(\mf e(\textbf i, k))_{k<r}$. So, we defined $$\psi:[\oo]^r\to\prod_{k<r}(2^{2k}\cup \{\emptyset\}),$$ and hence there is an $I\in [\oo]^\oo$ and $(\mf e(k))_{k<r}$ so that $\psi\uhr [I]^r$ is constant  $(\mf e(k))_{k<r}$.
		
		Now, we simply let  $\tilde A_l=\{\alpha^l_i:i\in I\cup\{\oo\}\}$ for $l<r$. It is easily checked that  the conclusions of Claim \ref{clm:thin1} are still satisfied.

		\end{proof}

		Again, we drop the tilde notation and assume that $\textbf A$ satisfies the conclusions of Claim \ref{clm:thin2}. 

\begin{cor}
        For any $\ell\leq r$, any two canonical $\ell$-candidates with respect to $\textbf A$ are $F$-similar.
\end{cor}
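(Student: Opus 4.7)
The plan is to chain together the two preceding claims; essentially no new argument is needed. First, by Claim \ref{c:stype} and (more directly) the strengthening recorded in the paragraph following its proof, the $F$-similarity type of a canonical $\ell$-candidate $\bar{\alpha}$ is completely determined by the derived sequence $(\ev{\textbf{A}}{\textbf{i}}{k})_{k<r}$, where $\textbf{i}$ is the index sequence associated with $\bar{\alpha}$. Hence any two canonical $\ell$-candidates producing the same such sequence automatically lie in the same $F$-similarity class, regardless of the choice of the ``top'' indices $\textbf{j}$.

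Second, Claim \ref{clm:thin2} (after which we quietly re-denote $\tilde{\textbf{A}}$ by $\textbf{A}$) guarantees that $\ev{\textbf{A}}{\textbf{i}}{k}=\mf{e}(k)$ for every admissible $\textbf{i}$ and every $k<r$. Put otherwise, the sequence $(\ev{\textbf{A}}{\textbf{i}}{k})_{k<r}$ no longer depends on $\textbf{i}$: it is the fixed sequence $(\mf{e}(k))_{k<r}$.

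Combining the two observations, any two canonical $\ell$-candidates $\bar{\alpha}$ and $\bar{\alpha}'$, with possibly distinct index tuples $\textbf{i}, \textbf{i}'$ (and arbitrary pairing choices $\textbf{j}, \textbf{j}'$), produce exactly the same derived data $(\mf{e}(k))_{k<r}$, and therefore share the same $F$-similarity type. I do not expect any genuine obstacle: the heavy lifting has been done in Claims \ref{c:stype} and \ref{clm:thin2}. The one fine point worth explicit verification is simply that the reformulation of Claim \ref{c:stype} is being applied across different $\textbf{i}$'s (the original statement fixes $\textbf{i}$, but the discussion immediately following its proof extracts precisely the stronger ``the similarity type is a function of $(\ev{\textbf{A}}{\textbf{i}}{k})_{k<r}$'' formulation that is needed here).
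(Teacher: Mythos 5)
Your argument is exactly the one the paper intends: the remark after Claim~\ref{c:stype} records that the $F$-similarity type of a canonical $\ell$-candidate is a function of $(\ev{\textbf{A}}{\textbf{i}}{k})_{k<r}$, and Claim~\ref{clm:thin2} makes that sequence a constant $(\mf{e}(k))_{k<r}$ independent of $\textbf{i}$, so any two canonical $\ell$-candidates are $F$-similar. The paper gives no separate proof of the Corollary precisely because this two-step chaining is immediate, and your ``fine point'' about the strengthened reformulation of Claim~\ref{c:stype} is the right thing to have noticed.
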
	
		

		
		\medskip
		
		\textbf{Step 5.} We are ready to find our infinite monochromatic sumset. Recall that the colourings $c_{s_l}$ are $F$-canonical on $[\bigcup_{i<r} A_i]^{r+l}$ so the colour of a sequence is determined solely by its similarity type. In previous steps, we achieved that for a fixed $\ell\leq r$ all canonical $\ell$-candidates are $F$-similar. In turn, as we have $r+1$ many patterns, we can find $\ell<k\leq r$ so that the map $c_{s_\ell}$ on canonical $\ell$-candidates and the map $c_{s_k}$ on canonical $k$-candidates assumes the same constant value.
		
		
		\begin{clm}
			There is an $X=\{x_i:i<\oo\}\subseteq N(2^{\aleph_0})$ with $\supp(x_i)\subset W$ so that
			\begin{enumerate}
				\item $\supp(2x_i)$ is a canonical $\ell$-candidate and $2x_i=s_\ell*\supp(2x_i)$, and
				\item $\supp(x_i+x_j)$ is a canonical $k$-candidate  and $x_i+x_j=s_k*\supp(x_i+x_j)$ 
			\end{enumerate}for all $i<j<\oo$.
			
		\end{clm}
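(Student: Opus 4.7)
The plan is to extend the construction in Lemma~\ref{lm:leader} to the canonical-candidate setup. Concretely, for each $i<\omega$ I would define $x_i \in N(2^{\aleph_0})$ with
\[
\supp(x_i) = \{\alpha^m_0, \alpha^m_\omega : m<\ell\} \cup \{\alpha^m_i : \ell\leq m<k\} \cup \{\alpha^m_\omega : k\leq m<r\},
\]
assigning value $1$ to the first $2\ell$ coordinates (those from $A_m$ with $m<\ell$) and value $2$ to the remaining $r-\ell$ coordinates. Then $2x_i$ has the same support as $x_i$ with values $2$ and $4$ in the two blocks, matching the pattern $s_\ell$ exactly.

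For $i<j$, the support of $x_i+x_j$ is
\[
\{\alpha^m_0,\alpha^m_\omega : m<\ell\} \cup \{\alpha^m_i,\alpha^m_j : \ell\leq m<k\} \cup \{\alpha^m_\omega : k\leq m<r\},
\]
since on the shared coordinates the two copies add to $2$ (from $1+1$) or to $4$ (from $2+2$), while on the new coordinates $\alpha^m_i,\alpha^m_j$ only one of the summands contributes, giving value $2$. Reading the values in increasing order of the support gives $2k$ entries equal to $2$ followed by $r-k$ entries equal to $4$, which is exactly $s_k*\supp(x_i+x_j)$.

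The remaining task is to verify that $\supp(x_i)$ is a canonical $\ell$-candidate and $\supp(x_i+x_j)$ is a canonical $k$-candidate with respect to $\textbf A$. Condition~(1) of the two definitions is immediate by inspecting the indices: for $\supp(x_i)$ the relevant sequence is $0,\dots,0,i,\dots,i,\omega,\dots,\omega$ (with $2\ell$, $k-\ell$ and $r-k$ entries respectively), and for $\supp(x_i+x_j)$ it is the same except that each single index $i$ is replaced by the pair $(i,j)$ with $i<j$. The only delicate point is condition~(2) applied to $\supp(x_i+x_j)$, which asks that $\delta^m_j > \delta^{k-1}_{i}$ for $\ell\leq m<k$. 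This follows from Claim~\ref{clm:thin1}(2)(b): for $m<k-1$ we have $\delta^m_i<\delta^{k-1}_i<\delta^m_{i+1}$, and for $m=k-1$ the inequality $\delta^{k-1}_{i+1}>\delta^{k-1}_i$ is immediate from strict monotonicity; combined with $j\geq i+1$ this yields $\delta^m_j\geq\delta^m_{i+1}>\delta^{k-1}_i$ in both cases.

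Once $X=\{x_i : i<\omega\}$ is chosen this way, both $c(2x_i) = c_{s_\ell}(\supp(x_i))$ and $c(x_i+x_j) = c_{s_k}(\supp(x_i+x_j))$ are constant by the $F$-canonicity of $c_{s_\ell}$ and $c_{s_k}$ combined with the fact (established in Steps~1--4) that any two canonical candidates of the same order are $F$-similar; and by the choice of $\ell<k$ in Step~5 the two constant values agree. The only non-trivial step is the verification of condition~(2) of the canonical-candidate definition, which is precisely what the strict interleaving of splitting levels arranged in Claim~\ref{clm:thin1}(2)(b) was designed to guarantee.
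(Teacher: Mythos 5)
Your proposal follows the paper's own argument essentially verbatim: you define $x_i$ with exactly the same support and the same value assignment ($1$ on the $2\ell$ coordinates from $A_0,\dots,A_{\ell-1}$ and $2$ on the rest), compute $\supp(2x_i)$ and $\supp(x_i+x_j)$ identically, and reduce the only nontrivial point to verifying condition~(2) of the canonical-candidate definition via the interleaving $\delta^l_i<\delta^{k-1}_i<\delta^l_{i+1}$ from Claim~\ref{clm:thin1}(2)(b). The paper leaves that last inequality as ``clear from Claim~\ref{clm:thin1}''; you spell it out in two cases ($m<k-1$ and $m=k-1$), which is a slightly more explicit but not a different argument, so the two proofs coincide.
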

		In particular,  $c(2x_i)=c(x_i+x_j)$ so $c\uhr X+X$ is constant as desired.
		
		\begin{proof}
			
			We let
			$$\supp(x_i)=\{\alpha^l_0,\alpha^{l}_\oo:l<\ell\}\cup\{\alpha^l_{i}:\ell\leq l<k\}\cup \{\alpha^l_\oo:k\leq l<r\}$$
			
			and define $x_i$ as follows:

			\[
			x_i(\alpha) = \begin{cases}
			1, & \text{for } \alpha\in\{\alpha^l_0,\alpha^{l}_\oo:l<\ell\},\\
			2, & \text{for } \alpha\in \{\alpha^l_i:\ell\leq l<k\}\cup \{\alpha^l_\oo:k\leq l<r\}.
			\end{cases}
			\]
			It is clear that  $\supp(2x_i)=\supp(x_i)$ is a canonical $\ell$-candidate with respect to $\textbf A$. Moreover, $2x_i=s_\ell*\supp(2x_i)$ by the definition of $x_i$.
			
			Now
			$$\supp(x_i+x_j)=\{\alpha^l_0,\alpha^{l}_\oo:l<\ell\}\cup\{\alpha^l_{i},\alpha^l_{j}:\ell\leq l<k\}\cup \{\alpha^l_\oo:k\leq l<r\}$$ for any $i<j<\oo$. This is a canonical $k$-candidate; indeed, we need to check (2) i.e. that $\delta^{l}_{j}>\delta^{k-1}_{i}$ for all $\ell\leq l<k$. This is clear from Claim \ref{clm:thin1} however.
		
		\end{proof}

	\section{Open problems}
	
	There are various problems that remain open at this point. In order to state these questions concisely, we introduce the following notation: given some additive structure $(A,+)$, let $\rd A$ denote the minimal $r$ so that there is an $r$-colouring of $A$ with no monochromatic set of the form $X+X$ for some infinite $X\subseteq A$. Note that the larger $\rd A$ is, the stronger partition property $A$ satisfies.
	
	Now, with this new notation,  we restate Owings' problem: 
	\begin{prob}\cite{owings}
	Does $\rd {\mbb N}=2$ hold?
	\end{prob}
	It was recently noted by J. Zhang\footnote{Personal communication.} that $\rd {\mbb R}=2$ holds in ZFC. However, the next problem remains open:
	
	\begin{prob}
	Does $\rd {\mbb Q}=2$ hold?
	\end{prob}
	
	
	It is easy to see that $\rd{Q(\kappa)}\leq \aleph_0$ for any $\kappa$ and so the conclusion of our main theorem can be rephrased as $\rd {\mbb R}=\aleph_0$. 
	
	 	\begin{prob}
	   Does $\rd {\mbb R}=\aleph_0$ hold if $2^{\aleph_0}$ is real valued measurable?
	\end{prob}

	Also, recall that $h(Q(\aleph_n))\leq 2^{4+n}\cdot 9$ \cite{Hindman}. Now, we ask the following:

	\begin{prob}
	 What is the asymptotic behaviour of the function $n\mapsto \rd{ Q(\aleph_n)}$? Is it truly exponential?
	\end{prob}
	
	A linear lower bound, under GCH, is provided by \cite{leaderrussell}.
	\medskip

	Probably the most intriguing question about large direct sums is the following:
	
	\begin{prob}
Is $\rd{ Q(\aleph_\oo)}=\aleph_0$ provable in ZFC?
	\end{prob}
	
	GCH implies $\rd{ Q(\aleph_\oo)}=\aleph_0$ \cite{leaderrussell} and a positive answer in ZFC would strengthen our main theorem. 

	\medskip
	
The study of Abelian semigroups and finding larger, uncountable monochromatic sumsets  has been already started in \cite{davidassaf,leaderrussell}. It would also be interesting to see what one can say about unbalanced sumsets i.e., sets of the form $X+Y$ with $X,Y\subseteq A$. More precisely:

	\begin{prob} Given an additive structure $(A,+)$ and $r$, characterize those $(\kappa,\lambda)$ such that whenever $c:A\to r$ then there is $X,Y\subseteq A$ with $|X|=\kappa,|Y|=\lambda$ and $c\uhr X+Y$ constant.
	 
	\end{prob}
	
It would be interesting to  analyze similar questions in non Abelian settings as well.

\bibliographystyle{amsplain}


\end{document}